\DeclareMathAlphabet{\mathpzc}{OT1}{pzc}{m}{it}
\newcounter{item}[section]
\newcounter{kirshr}
\newcounter{kirsha}
\newcounter{kirshb}
\newtheorem{theorem}{Theorem}[section]
\newtheorem{lemma}[theorem]{Lemma}
\newtheorem{proposition}[theorem]{Proposition}
\newtheorem{remark}[theorem]{Remark}
\newtheorem{definition}[theorem]{Definition}
\newcommand\undersym[2]{\raisebox{-6pt}{\tiny$#2$}{\kern-5pt}\mbox{$#1$}}
\newcommand\overcirc[1]{\raisebox{10pt}{\tiny$\circ$}{\kern-7pt}\mbox{$#1$}}
\title{\Large{Classification of 1-Weierstrass points on Kuribayashi quartics,\hspace*{-.12cm} II (with three parameters)}}
\date{}
\author{{\small Eslam E. Badr$^{1}$ and Mohammed A. Saleem$^{2}$}}
\begin{document}
\bibliographystyle{plain}
\maketitle                     
\vspace*{-.95cm}

\begin{center}
${^1}$Department of Mathematics, Faculty of Science, \\[-0.15cm]
Cairo University, Giza, Egypt \\[-0.15cm]
\vspace*{.3cm}
${^2}$Department of Mathematics, Faculty of Science,\\[-0.15cm]
Sohag University, Sohag, Egypt \\[-0.15cm]
\end{center}

\begin{center}
$
\begin{array}{ll}
\text{Emails:}&\text{eslam@sci.cu.edu.eg, eslam60145@yahoo.com} \\
  &\text{abuelhassan@yahoo.com}
\end{array}
$
\end{center}

\maketitle \bigskip
\begin{abstract}
In this paper, we classify the 1-Weierstrass points of the
Kuribayashi quartic curves with three parameters $a,\,b$ and $c$ defined
by the equation
\[
C_{a,b,c}:x^{4}+y^{4}+z^{4}+ax^{2}y^{2}+bx^{2}z^2+cy^{2}z^{2}=0,
\]
such that $(a^2-1)(b^2-1)(c^2-1)(a^2-4)(b^2-4)(c^2-4)(a^2+b^2+c^2-abc-4)\neq0.$ Furthermore, the geometry of these points is investigated.\\\\
\textbf{MSC 2010}: Primary 14H55, 14R20; Secondary 14H37, 14H45, 14H50 \\\\
\textbf{Keywords}: Kuribayashi quartics, 1-Weierstrass points,
Flexes, Riemann surfaces, Group action, Automorphisms group, Orbits,
Fixed points.
\end{abstract}
\newpage

\section{Introduction}
Let $C_{a}$ be the smooth plane quartic curves defined by the
equation
\[
C_{a}:x^{4}+y^{4}+z^{4}+a(x^{2}y^{2}+x^{2}z^{2}+y^{2}z^{2})=0,\quad\quad
a\neq-1,\pm2.%
\]
These types of quartic curves are called \emph{Kuribayashi quartics
with one parameter. }Weierstrass points and automorphism groups of
Riemann surfaces of genus $3$ are studied in \cite{pa11, pa15}. In
particular, Weierstrass points and automorphism groups of $C_a$ are
studied in \cite{pa12}. Kuribayashi and his students used \emph{the
Wronskian method} to classify the number of the 1-Weierstrass points
of $C_a$. Alwaleed \cite{pa3, pa4} used the \emph{the Wronskian
method} together with the $S_4$ action on $C_a$ to classify the
number and investigate the geometry of the 2-Weierstrass points of
this family.

 Let $C_{a,b}$ be the smooth plane quartic curves defined by the equation
\[
C_{a,b}:x^{4}+y^{4}+z^{4}+ax^{2}y^{2}+b(x^{2}+y^{2})z^{2}=0,
\]
where $a$ and $b$ are two parameters such that
$(a^2-4)(b^2-4)(a^2-1)(b^{2}-a-2)\neq0.$ We call these quartic
curves \emph{Kuribayashi quartics with two parameter. }Hayakawa
\cite{pa5} investigated the conditions under which the number of the
Weierstrass points of $C_{a,b}$ is $12$ or $16$.
\par In \cite{pa20}, the present authors have classified the number of the 1-Weierstrass points of $C_{a,b}$ together with their geometry. Moreover, they have obtained the results of Kuribayashi and Sekita \cite{pa12} on 1-Weierstrass points of the quartics $C_a$ as a particular case.

 Let $C_{a,b,c}$ be the smooth plane quartic curves defined by the equation
\[
C_{a,b,c}:x^{4}+y^{4}+z^{4}+ax^{2}y^{2}+bx^{2}z^2+cy^{2}z^{2}=0,
\]
where $a,\,b$ and $c$ are three parameters such that \[(a^2-1)(b^2-1)(c^2-1)(a^2-4)(b^2-4)(c^2-4)(a^2+b^2+c^2-abc-4)\neq0.\]
Hayakawa \cite{pa2} investigated the conditions under which the number of the 1-Weierstrass points of $C_{a,b,c}$ is exactly $12$ or $<24$.

The aim of this paper is to generalize the results done in \cite{pa20}. For this purpose, we consider a different approach (group actions on Riemann surfaces) from that followed by Hayakawa to classify the number of such points completely. Furthermore, we investigate the geometry of these points.

The present paper is organized in the following manner. In section
$1,$ we present some preliminaries concerning the basic concepts
that will be used throughout the work \cite{pa8, pa13}. In section
$2,$ we establish our main results, \emph{Theorems 2.7, 2.8, 2.9, 2.10,} that
concern with the classification of the number of the 1-Weierstrass
points of the quartic curves $C_{a,b,c}$ together with their geometry.
In section $3$, we illustrate, through examples, the cases mentioned
in the main results. Finally, we conclude the paper with some
remarks, comments and related problems.
\section{Preliminaries}

\subsection{q-Weierstrass points}

Let $C$ be a smooth projective plane curve of genus $g\geq2$ and let $D$ be a
divisor on $C$ with $dim|D|=r\geq0$. We denote by $L(D)$ the $\mathbb{C}%
$-vector space of meromorphic functions $f$ such that $div(f)+D\geq0$ and
by $l(D)$ the dimension of $L(D)$ over $\mathbb{C}$. Then, the
notion of $D$-Weierstrass points \cite{pa13} can be defined in the following way:
\begin{definition}
Let $p\in C.$ If $n$ is a positive integer such that
\[
l\big(D-(n-1).p\big)>l\big(D-n.p\big),
\]
we call the integer $n$ a $D$-gap number at $p$.
\end{definition}

\begin{lemma}
Let $p\in C,$ then there are exactly $r+1$ $D$-gap numbers $\{n_{1},n_{2},...,n_{r+1}\}$ such that $n_{1}<n_{2}<...<n_{r+1}$. The sequence $\{n_{1},n_{2},...,n_{r+1}\}$ is called the $D$-gap sequence at $p$.
\end{lemma}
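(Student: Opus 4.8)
The plan is to track how the dimension $l(D-n\cdot p)$ evolves as $n$ increases, showing that it descends from $l(D)=r+1$ down to $0$ in unit steps, and that each unit drop is precisely a $D$-gap number. First I would set $a_n := l\big(D-n\cdot p\big)$ for integers $n\geq 0$, so that $a_0=l(D)$. Since $\dim|D|=r$ means $l(D)=r+1$, we have $a_0=r+1$. With this notation, $n$ is a $D$-gap number exactly when $a_{n-1}>a_n$.

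The key local estimate I would establish is that for any divisor $E$ one has $0\leq l(E)-l(E-p)\leq 1$. The left inequality is immediate from the inclusion $L(E-p)\subseteq L(E)$, since $div(f)+E-p\geq 0$ forces $div(f)+E\geq 0$. For the right inequality, I would fix a local uniformizer $t$ at $p$ and let $e$ be the coefficient of $p$ in $E$; then the assignment sending a function $f\in L(E)$ to the coefficient of $t^{-e}$ in its Laurent expansion at $p$ defines a $\mathbb{C}$-linear map $L(E)\to\mathbb{C}$ whose kernel is exactly $L(E-p)$, namely those $f$ whose order at $p$ is at least $-e+1$. Hence $L(E)/L(E-p)$ embeds into $\mathbb{C}$ and the codimension is at most $1$. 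Applying this with $E=D-(n-1)\cdot p$ yields $a_{n-1}-a_n\in\{0,1\}$ for every $n\geq 1$.

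Next I would note that the sequence stabilizes at $0$: once $\deg\big(D-n\cdot p\big)<0$, the space $L(D-n\cdot p)$ contains no nonzero function, so $a_n=0$ for all sufficiently large $n$. Writing the telescoping sum $\sum_{n\geq 1}(a_{n-1}-a_n)=a_0-\lim_{n\to\infty}a_n=(r+1)-0=r+1$, and recalling that each summand is either $0$ or $1$, I conclude that exactly $r+1$ indices $n$ satisfy $a_{n-1}-a_n=1$. These indices are, by Definition, the $D$-gap numbers, so there are precisely $r+1$ of them; listing them in increasing order produces the gap sequence $n_1<n_2<\cdots<n_{r+1}$.

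The argument is essentially routine once the unit-step property is in hand, so I do not anticipate a serious obstacle. The one point demanding care is the upper bound $l(E)-l(E-p)\leq 1$, where one must correctly identify the kernel of the leading-coefficient map with $L(E-p)$ and keep the sign conventions for the order of vanishing consistent with the divisor $E$.
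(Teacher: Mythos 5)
Your proof is correct. Note that the paper itself states this lemma without proof, quoting it as a standard preliminary from the literature (Miranda's \emph{Algebraic Curves and Riemann Surfaces}), so there is no in-paper argument to compare against; what you have written is precisely the standard textbook proof. All the essential points are in place: the unit-step estimate $0\leq l(E)-l(E-p)\leq 1$ via the Laurent leading-coefficient map $L(E)\to\mathbb{C}$ with kernel $L(E-p)$, the vanishing $l(D-n\cdot p)=0$ once $\deg(D-n\cdot p)<0$, and the telescoping count $\sum_{n\geq 1}(a_{n-1}-a_n)=a_0=r+1$ with each summand in $\{0,1\}$, which forces exactly $r+1$ gap numbers. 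The only implicit point worth making explicit is that the sequence $(a_n)$ is nonincreasing with $a_n=0$ for all large $n$, so the telescoping sum has only finitely many nonzero terms and the limit manipulation is legitimate; this follows immediately from your unit-step estimate, so the argument is complete.
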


\begin{definition}
The integer
$\omega_{D}(p):=\sum_{i=1}^{r+1}(n_{i}-1)$ is called $D$-weight at $p$. If
$\omega_{D}(p)>0$, we call the point $p$ a $D$-Weierstrass point on $C$. In
particular, for the canonical divisor $K$, the $qK$-Weierstrass points $(q\geq1)$
are called $q$-Weierstrass points and the $qK$-weight is called $q$-weight and is denoted by $\omega^{(q)}(p)$.
\end{definition}

\begin{definition}
\em{\cite{pa4}} A point $p$ on a smooth plane curve $C$ is said to
be a flex point if the tangent line $L_p$ meets $C$ at $p$ with
contact order $I_{p}(C,L_{p})$ at least three. We say that $p$ is
$i$-flex, if $I_{p}(C,L_{p})-2=i$. The positive integer $i$ is
called the flex order of $p.$
\end{definition}

\begin{lemma}
\em{\cite{pa8}} Let $C: F(x,y,z)=0$ be a smooth projective plane
curve. A point $p$ on $C$  is a flex point if, and only if,
$H_F(p)=0,$ where $H_F$ is the Hessian curve of $C$ defined by
\[H_F:=det\left(
             \begin{array}{ccc}
             F_{xx} & F_{xy} & F_{xz} \\
             F_{yx} & F_{yy} & F_{yz} \\
              F_{zx} & F_{zy} & F_{zz}
                    \end{array}
                  \right)
.\]

\end{lemma}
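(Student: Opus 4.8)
The plan is to reduce the claim to a one-point computation by exploiting projective invariance, and then to evaluate the Hessian determinant at that point with the help of Euler's relations. I expect the main subtlety to lie not in the algebra but in justifying this reduction, so I would begin there. Being a flex is defined through the intersection multiplicity $I_p(C,L_p)$, which is a geometric quantity and hence unchanged by a projective change of coordinates. On the Hessian side, if $\tilde F(\mathbf x)=F(A\mathbf x)$ for $A\in GL_3(\mathbb C)$, then the chain rule gives, for the matrix of second partials, $\mathrm{Hess}\,\tilde F(\mathbf x)=A^{\mathsf T}\,\mathrm{Hess}\,F(A\mathbf x)\,A$, whence $H_{\tilde F}(\mathbf x)=(\det A)^2\,H_F(A\mathbf x)$. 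Thus the vanishing of $H_F$ at a point is also coordinate-independent, and I am free to choose convenient coordinates.

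With this in hand I would normalize the situation at $p$. Since $C$ is smooth, $\nabla F(p)\neq 0$; applying a projective transformation I may assume $p=[0:0:1]$ and that the tangent line $L_p$ is $\{y=0\}$. Euler's identity $xF_x+yF_y+zF_z=dF$, evaluated at $p$ (where $F(p)=0$ and the $x,y$-coordinates vanish), forces $F_z(p)=0$, while the prescribed tangent direction gives $F_x(p)=0$ and $F_y(p)\neq 0$.

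Next I would read off the flex condition directly from the tangent line. Parametrizing $L_p$ as $(x,y,z)=(t,0,1)$ and setting $g(t):=F(t,0,1)$, one finds $g(0)=F(p)=0$ and $g'(0)=F_x(p)=0$, so the contact order is automatically at least two; moreover $g''(0)=F_{xx}(p)$. Hence $p$ is a flex, i.e.\ $I_p(C,L_p)\geq 3$, precisely when $F_{xx}(p)=0$. It then remains to express the Hessian in these terms. Differentiating Euler's relation yields $xF_{ix}+yF_{iy}+zF_{iz}=(d-1)F_i$ for each variable $i$; at $p=[0:0:1]$ this gives $F_{xz}(p)=(d-1)F_x(p)=0$, $F_{zz}(p)=(d-1)F_z(p)=0$, and $F_{yz}(p)=(d-1)F_y(p)$. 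Substituting these entries into the Hessian matrix and expanding the determinant along its third row collapses everything to $H_F(p)=-(d-1)^2F_y(p)^2F_{xx}(p)$. For a quartic $d=4$, so $(d-1)\neq 0$, and $F_y(p)\neq 0$ by smoothness; therefore $H_F(p)=0$ if and only if $F_{xx}(p)=0$, which by the previous step is exactly the condition that $p$ be a flex. This proves the equivalence.
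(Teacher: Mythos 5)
Your proof is correct. Note first that the paper itself contains no argument for this lemma: it is quoted from Gibson \cite{pa8} as a known fact, so there is no internal proof to compare against, and your write-up supplies what the paper omits. Your argument is essentially the standard textbook one, and it is complete: the transformation law $H_{\tilde F}(\mathbf x)=(\det A)^2\,H_F(A\mathbf x)$ correctly justifies reducing to $p=[0:0:1]$ with tangent line $\{y=0\}$; Euler's relation and its differentiated form $xF_{ix}+yF_{iy}+zF_{iz}=(d-1)F_i$ pin down $F_z(p)=F_{xz}(p)=F_{zz}(p)=0$ and $F_{yz}(p)=(d-1)F_y(p)$; and the determinant then collapses, as you say, to $H_F(p)=-(d-1)^2F_y(p)^2F_{xx}(p)$, which vanishes precisely when $g''(0)=F_{xx}(p)=0$, i.e.\ when $I_p(C,L_p)\geq 3$. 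Two minor remarks. First, your closing appeal to $d=4$ is unnecessary and slightly undersells the result: the lemma is stated for an arbitrary smooth projective plane curve, and over $\mathbb{C}$ one has $d-1\neq 0$ for every $d\geq 2$, so your computation proves it in that generality with no change. Second, you implicitly use that the restriction $g(t)=F(t,0,1)$ is not identically zero (so that the contact order is the order of vanishing of $g$); for a smooth curve of degree $\geq 2$ the tangent line cannot be a component, and even in that degenerate scenario both sides of the equivalence would hold ($g\equiv 0$ forces $F_{xx}(p)=0$), so nothing is lost — but a sentence acknowledging this would make the reduction airtight.
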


\begin{lemma}
\em{\cite{pa14}} Let $C$ be a smooth projective plane quartic curve. The $1$-Weierstrass points on $C$ are nothing but flexes and divided into two types: ordinary flex and hyperflex points. Moreover, we have%

\begin{center}
    \begin{tabular}
[c]{|c|c|c|}\hline
$\omega^{(1)}(p)$ & $1$-gap Sequence & Geometry\\\hline
$1$ & $\{1,2,4\}$ & ordinary flex\\\hline
$2$ & $\{1,2,5\}$ & hyperflex\\\hline
\end{tabular}
\end{center}

\emph{\ }
\end{lemma}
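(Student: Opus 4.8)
The plan is to convert the abstract gap‑number definition of $1$-Weierstrass points into the concrete geometry of line sections, exploiting the fact that for a smooth plane quartic the canonical system \emph{is} the system of lines. First I would record that a smooth plane quartic $C$ has genus $g=3$, so the canonical divisor satisfies $\deg K=2g-2=4$ and $l(K)=g=3$; by adjunction $K=\mathcal{O}_{C}(1)$, so $|K|$ is exactly the $g^{2}_{4}$ cut out on $C$ by the lines of $\mathbb{P}^{2}$, the restriction map $\langle x,y,z\rangle\to H^{0}(\mathcal{O}_{C}(1))$ being an isomorphism. In particular $r=\dim|K|=2$, so by Lemma 1.2 there are exactly $r+1=3$ gap numbers at every $p\in C$.

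The key identification is
\[
L\bigl(K-n\,p\bigr)\;\cong\;\bigl\{\ell\in\langle x,y,z\rangle \;:\; I_{p}\bigl(C,\{\ell=0\}\bigr)\ge n\bigr\},
\]
the space of linear forms whose zero line meets $C$ at $p$ with contact at least $n$. I would then compute its dimension step by step. For $n=0$ it is all of $\langle x,y,z\rangle$, so $l(K)=3$. Imposing passage through $p$ is a single nontrivial linear condition, so $l(K-p)=2$. Since $p$ is a smooth point there is a unique tangent line $L_{p}$, the only line with $I_{p}\ge 2$; hence $l(K-2p)=1$, and for every $n\ge 2$ one gets $l(K-n\,p)\le 1$, equal to $1$ precisely when $I_{p}(C,L_{p})\ge n$ and to $0$ otherwise. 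Finally, by B\'ezout's theorem a line meets $C$ in four points counted with multiplicity, so $2\le I_{p}(C,L_{p})\le 4$.

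Feeding these dimensions into the gap criterion $l\bigl(K-(n-1)p\bigr)>l\bigl(K-n\,p\bigr)$ yields the three cases at once. When $I_{p}(C,L_{p})=2$ the gaps are $\{1,2,3\}$; when $I_{p}(C,L_{p})=3$ the integer $3$ fails to be a gap while $4$ becomes one, giving $\{1,2,4\}$; when $I_{p}(C,L_{p})=4$ both $3$ and $4$ drop out and $5$ enters, giving $\{1,2,5\}$. Summing the excesses $n_{i}-i$ over each gap sequence gives $\omega^{(1)}(p)=0,\,1,\,2$ respectively. Thus $\omega^{(1)}(p)=I_{p}(C,L_{p})-2$, which is positive exactly when $I_{p}(C,L_{p})\ge 3$, i.e.\ exactly when $p$ is a flex (equivalently $H_{F}(p)=0$ by Lemma 1.5). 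The two positive values correspond to the ordinary flex ($i=1$, weight $1$) and the hyperflex ($i=2$, weight $2$), and no larger weight can occur because of the B\'ezout bound $I_{p}(C,L_{p})\le 4$, which reproduces the table.

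The step I expect to be most delicate is the dimension count for $n\ge 2$: one must argue carefully that through a smooth point the tangent line is the \emph{unique} line with contact $\ge 2$, so that $L(K-n\,p)$ can never exceed dimension $1$ once $n\ge 2$. Everything else is a bookkeeping consequence of this uniqueness together with the B\'ezout bound, so the whole lemma reduces to these two geometric facts about lines through a smooth point of $C$.
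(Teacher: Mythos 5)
The paper offers no internal proof of this lemma: it is quoted verbatim from Vermeulen's thesis \cite{pa14}, so there is nothing in the text to compare against step by step. Your proposal supplies a correct, self-contained proof of the standard kind, and every step checks out. The chain of identifications is right: genus $3$, $\deg K=4$, $K=\mathcal{O}_{C}(1)$ by adjunction, and $H^{0}(\mathbb{P}^{2},\mathcal{O}(1))\to H^{0}(C,\mathcal{O}_{C}(1))$ an isomorphism (both sides have dimension $3$ and no linear form vanishes on a quartic), whence $L(K-n\,p)$ is the space of linear forms $\ell$ with $I_{p}(C,\{\ell=0\})\geq n$. Your dimension counts are also correct: $l(K)=3$, $l(K-p)=2$ since passing through $p$ is one nontrivial condition, and the point you flag as delicate is genuinely the crux but is unproblematic --- at a smooth point the tangent $L_{p}$ is the unique line with contact $\geq 2$ (any other line through $p$ meets $C$ there with multiplicity exactly $1$), so $l(K-n\,p)\in\{0,1\}$ for $n\geq 2$, equal to $1$ iff $I_{p}(C,L_{p})\geq n$. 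B\'ezout caps $I_{p}(C,L_{p})$ at $4$; for the top step you could even avoid B\'ezout, since $\deg(K-5p)=-1<0$ forces $l(K-5p)=0$ outright. The resulting gap sequences $\{1,2,3\}$, $\{1,2,4\}$, $\{1,2,5\}$ with weights $0,1,2$ give $\omega^{(1)}(p)=I_{p}(C,L_{p})-2$, matching Definition 1.4's flex order and reproducing the table, so 1-Weierstrass points are exactly flexes, split into ordinary flexes and hyperflexes. What your route buys is self-containedness: the identity $\omega^{(1)}(p)=I_{p}(C,L_{p})-2$ is exactly what the paper uses implicitly when it converts the contact-order-$4$ computation in Proposition 2.5 into the conclusion ``hyperflex,'' so your proof makes that inference transparent rather than outsourced. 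One small note: you compute the weight as $\sum_{i}(n_{i}-i)$, which is the standard definition and the only one consistent with the table; the paper's Definition 1.3 literally reads $\sum_{i}(n_{i}-1)$, an evident typo that your computation silently corrects.
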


\begin{lemma}
\em{\cite{pa7, pa13}} Let $C$ be a smooth projective plane curve of
genus $g$. The number of $q$-Weierstrass points $N^{(q)}(C),$
counted with their $q$-weights, is given by
\[
N^{(q)}(C)=\left\{
\begin{array}
[c]{lr}%
g(g^{2}-1),\,\,\,\,\,\,\,\,\,\,\,\,\,\,\,\,\,\,\,\,\,\,\,\,\,\,\,\,\,\,\,\,\,\,\,\,\,\,if\, q=1 & \\\\
(2q-1)^{2}(g-1)^{2}g,\,\,\,\,\,\,\,\,\,\,\,\,\,\,\,if\, q\geq2. &
\end{array}
\right.
\]
In particular, for smooth projective plane quartics $($i.e.\,\,$g=3)$, the number
of $1$-Weierstrass points, counted with their weights, is $24$ .
\end{lemma}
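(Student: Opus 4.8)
The plan is to compute the total $D$-weight $\sum_{p\in C}\omega_D(p)$ as the degree of an explicit line bundle, and then specialize to $D=qK$. First I would fix a basis $\phi_0,\dots,\phi_r$ of $L(D)$, where $r+1=l(D)$, and form, relative to a local holomorphic coordinate $t$, the Wronskian $W=\det\bigl(\phi_i^{(j)}\bigr)_{0\le i,j\le r}$, the derivatives being taken with respect to $t$. Although $W$ depends on the coordinate and on a local trivialization of $\mathcal{O}(D)$, the chain rule shows it rescales by a nowhere-vanishing factor under a change of either; summing the derivative orders $0+1+\cdots+r=\binom{r+1}{2}$ then shows that $W$ is in fact a global holomorphic section of $\mathcal{O}(D)^{\otimes(r+1)}\otimes K^{\otimes\binom{r+1}{2}}$. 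Since $\phi_0,\dots,\phi_r$ are linearly independent over $\mathbb{C}$ and we work in characteristic zero, $W\not\equiv0$.

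The heart of the argument is to identify $\operatorname{ord}_p W$ with the $D$-weight $\omega_D(p)$. For this I would choose, at a fixed point $p$, a basis adapted to the $D$-gap sequence $n_1<\cdots<n_{r+1}$, so that the $i$-th basis element vanishes to the prescribed $D$-order determined by $n_i$; expanding in the local coordinate and reading off the leading term of the determinant yields $\operatorname{ord}_p W=\sum_{i=1}^{r+1}(n_i-i)=\omega_D(p)$, in agreement with the normalization recorded in the preceding classification lemma. Because on the compact Riemann surface $C$ the number of zeros of a nonzero section, counted with multiplicity, equals the degree of its line bundle, summing over $p$ gives
\[
\sum_{p\in C}\omega_D(p)=\deg\bigl(\mathcal{O}(D)^{\otimes(r+1)}\otimes K^{\otimes\binom{r+1}{2}}\bigr)=(r+1)\deg D+r(r+1)(g-1).
\]

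It remains to evaluate $l(qK)$ and substitute. For $q=1$ the canonical case gives $l(K)=g$, hence $r+1=g$ and $\deg D=2g-2$; the formula factors as $2g(g-1)+g(g-1)^2=g(g^2-1)$. For $q\ge2$ Riemann--Roch applies cleanly, since $\deg(qK)=2q(g-1)>2g-2$ forces $l\bigl((1-q)K\bigr)=0$ and hence $l(qK)=2q(g-1)-g+1=(2q-1)(g-1)$; taking $r+1=(2q-1)(g-1)$ and $\deg D=2q(g-1)$ and simplifying produces $(2q-1)^2(g-1)^2g$. Specializing to a smooth plane quartic ($g=3$, $q=1$) then gives $N^{(1)}(C)=3(3^2-1)=24$.

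I expect the one genuinely delicate step to be the identification $\operatorname{ord}_p W=\omega_D(p)$, together with the verification that $W$ is a well-defined global section with the asserted transformation law; both hinge on careful use of the chain rule and on the precise meaning of the $D$-order of vanishing at $p$. By contrast, the Riemann--Roch computation of $l(qK)$ and the closing arithmetic are entirely routine.
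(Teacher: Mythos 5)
Your proposal is correct and coincides with the paper's treatment: the paper states this lemma without proof, citing Farkas--Kra and Miranda, and the Wronskian argument you outline (the Wronskian as a global nonzero section of $\mathcal{O}(D)^{\otimes(r+1)}\otimes K^{\otimes\binom{r+1}{2}}$, the identification $\operatorname{ord}_p W=\omega_D(p)$, then Riemann--Roch giving $l(qK)=(2q-1)(g-1)$ for $q\geq2$) is precisely the standard proof in those references, with all your arithmetic checking out in both cases. One incidental point in your favor: your normalization $\omega_D(p)=\sum_{i}(n_i-i)$ is the one the paper actually uses in its table of gap sequences for quartics, even though its earlier definition of the weight misprints this as $\sum_{i}(n_i-1)$.
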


Let $W^{(q)}(C)$ be the set of $q$-Weierstrass points on $C$ and $G^{(q)}(p)$ the $q$-gap sequence at the point $p\in C$.

\begin{lemma} \em{\cite{pa3}}
Let $\tau$ be an automorphism on $C$, then we have
\[
\tau\big(W^{(q)}(C)\big)=W^{(q)}(C)\quad and\quad G^{(q)}(\tau(p))=G^{(q)}(p).
\]

\end{lemma}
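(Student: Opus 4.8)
The plan is to reduce everything to the observation that the gap numbers are defined purely in terms of the dimensions $l\big(qK-n\cdot p\big)$, and that these dimensions are invariant under the automorphism $\tau$. The starting point is that pulling functions back along $\tau$ gives a linear isomorphism of Riemann--Roch spaces. Concretely, for any divisor $D$ the map $f\mapsto \tau^{*}f=f\circ\tau$ sends $L(D)$ isomorphically onto $L(\tau^{*}D)$, since $\operatorname{div}(\tau^{*}f)+\tau^{*}D=\tau^{*}\!\big(\operatorname{div}(f)+D\big)\geq0$ and $\tau^{*}$ preserves effectivity (being induced by an isomorphism); hence $l(D)=l(\tau^{*}D)$. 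Moreover $l(D)$ depends only on the linear equivalence class of $D$.

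Next I would use that $\tau$ fixes the canonical class: because $\tau$ is an automorphism, the pullback of a canonical divisor is again canonical, so $\tau^{*}K\sim K$ and therefore $\tau^{*}(qK)\sim qK$ for every $q\geq1$. Combining this with the previous paragraph and with $\tau^{*}(n\cdot p)=n\cdot\tau^{-1}(p)$, I obtain, for all $n$,
\[
l\big(qK-n\cdot p\big)=l\big(\tau^{*}(qK)-n\cdot\tau^{-1}(p)\big)=l\big(qK-n\cdot\tau^{-1}(p)\big).
\]
Replacing $p$ by $\tau(p)$ gives $l\big(qK-n\cdot\tau(p)\big)=l\big(qK-n\cdot p\big)$ for every $n$.

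Now the defining inequality of a $qK$-gap number at $\tau(p)$, namely $l\big(qK-(n-1)\tau(p)\big)>l\big(qK-n\,\tau(p)\big)$, holds exactly when the corresponding inequality holds at $p$; hence the two points have identical gap sequences, that is $G^{(q)}(\tau(p))=G^{(q)}(p)$. In particular the weights agree, $\omega^{(q)}(\tau(p))=\omega^{(q)}(p)$, so $p$ is a $q$-Weierstrass point if and only if $\tau(p)$ is. Thus $\tau$ maps $W^{(q)}(C)$ into itself; applying the same argument to the automorphism $\tau^{-1}$ yields the reverse inclusion, whence $\tau\big(W^{(q)}(C)\big)=W^{(q)}(C)$.

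The computations above are formal once the framework is in place; the single point requiring genuine care---and the step I would treat as the crux---is the invariance of the canonical class, $\tau^{*}K\sim K$. This is where the geometric input enters: one must justify that an automorphism carries the sheaf of holomorphic differentials (equivalently, a canonical divisor) to a linearly equivalent one. After that, the dimension count and the passage from $K$ to $qK$ are entirely routine.
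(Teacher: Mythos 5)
Your proof is correct. Note, however, that the paper itself gives no proof of this lemma at all: it is quoted as a known result with a citation to Alwaleed's thesis \cite{pa3}, so there is no in-paper argument to compare yours against. Your write-up is the standard one and is complete: the pullback isomorphism $L(D)\cong L(\tau^{*}D)$, the identity $\tau^{*}(n\cdot p)=n\cdot\tau^{-1}(p)$ (valid because $\tau$ is an isomorphism, so the fibre over $p$ is the single point $\tau^{-1}(p)$ with multiplicity one), and the invariance of $l$ under linear equivalence together give $l\big(qK-n\cdot\tau(p)\big)=l\big(qK-n\cdot p\big)$ for all $n$, from which equality of gap sequences and the setwise invariance of $W^{(q)}(C)$ (via the same argument applied to $\tau^{-1}$) follow formally. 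The step you single out as the crux, $\tau^{*}K\sim K$, is also fine and is easily discharged: for any meromorphic $1$-form $\omega$, the pullback $\tau^{*}\omega$ is again a meromorphic $1$-form with $\operatorname{div}(\tau^{*}\omega)=\tau^{*}\big(\operatorname{div}(\omega)\big)$, so $\tau^{*}K$ is itself a canonical divisor, and all canonical divisors on $C$ are linearly equivalent. So your proposal supplies a valid, self-contained proof of a statement the paper leaves to the literature.
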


\bigskip
\subsection{Group Action on Riemann Surfaces}
\vspace*{-.9cm}$\phantom{GroupActiononRiemannSurfaceaaaaaaaaaaaaaaaaaaaa}$[13]
\begin{definition}
An action of a finite group $G$ on a Riemann surface $C$ is a map
\[
\cdot:G\times C\longrightarrow C:(g,p)\longmapsto g\cdot p
\]
such that: $(gh)\cdot p=g\cdot(h\cdot p)$\,\,and\,\,$e\cdot p=p,$\,\,for all
$g,h\in G$ and $p\in C$, where $e$ is the identity element of $G.$
\end{definition}

\begin{definition}
The orbit of a point $p\in C$ is the set\,\, $Orb_{G}(p):=\{g\cdot p:g\in G\}.$
\end{definition}

\begin{definition}
The stabilizer of a point $p\in C$ is the subgroup
\[
G_{p}:=\{g\in G:g\cdot p=p\}.
\]
It is often called the isotropy subgroup of $p.$
\end{definition}

\begin{remark}
\em{The subgroup $G_{p}$ is cyclic and points in the same
orbit have conjugate stabilizers; Indeed, $G_{g\cdot
p}=gG_{p}g^{-1}.$ Moreover
\[|Orb_{G}(p)|\,|G_{p}|=|G|,\quad \forall p\in C.\]}\vspace*{.3cm}
\end{remark}
\noindent\textbf{Notation.}\,[1] The set of points $p\in C$ such
that $|G_{p}|>1$ is denoted by $X(C).$ Also
\[
X_{i}(C):=\{p\in C:|G_{p}|=i\}.
\]


\section{Main results}
A group action of order 4 on $C_{a,b,c}$ can be defined as follows. Let $H$ be the projective transformation group of order $4$ generated by the two elements $\sigma$ and $\tau$ of orders $2$, where
\[
\sigma:=\left(
\begin{array}
[c]{ccc}%
-1 & 0 & 0\\
0 & 1 & 0\\
0 & 0 & 1
\end{array}
\right)  ,\quad\tau:=\left(
\begin{array}
[c]{ccc}%
1 & 0 & 0\\
0 & -1 & 0\\
0 & 0 & 1
\end{array}
\right).
\]
\noindent It has been shown by Francesc \cite{pa23} that, $H\cong
C_2\times C_2$, where $C_m$ denotes the cyclic group of order $m$. Now, computing the fixed points of the automorphisms of $H$ on $C_{a,b,c}$ and their corresponding orbits gives rise to the following result.

\begin{lemma}
For the quartics $C_{a,b,c},$ we have:
\begin{eqnarray*}
X(C_{a,b,c})&=&Orb_{G}[0:\beta:1]\cup Orb_{G}\left[0:\frac{1}{\beta}:1\right]\cup Orb_{G}[\alpha:0:1]\cup\\
&&Orb_{G}[\dfrac{1}{\alpha}:0:1]\cup Orb_{G}[\delta:1:0]\cup Orb_{G}[\dfrac{1}{\delta}:1:0],
\end{eqnarray*}
where $\beta$ is a root of the equation $y^{4}+cx^{2}+1=0,$\,\,\,$\alpha$ is a root of the equation $x^{4}+bx^{2}+1=0$\,\,\, and\,\,\, $\delta$ is a root of the equation $x^{4}+ax^{2}+1=0$.
\end{lemma}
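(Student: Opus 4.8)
The plan is to read off $X(C_{a,b,c})$ directly from the definition: a point $p$ has $|G_{p}|>1$ precisely when it is fixed by some nontrivial element of the order-$4$ group $G=H=\langle\sigma,\tau\rangle=\{e,\sigma,\tau,\sigma\tau\}$, so that
\[
X(C_{a,b,c})=\big(\mathrm{Fix}(\sigma)\cup\mathrm{Fix}(\tau)\cup\mathrm{Fix}(\sigma\tau)\big)\cap C_{a,b,c}.
\]
First I would determine the fixed locus in $\mathbb{P}^{2}$ of each of the three involutions from its eigenspace decomposition. Since $\sigma=\mathrm{diag}(-1,1,1)$, its fixed locus is the line $\{x=0\}$ (the $2$-dimensional eigenspace) together with the isolated point $[1:0:0]$; likewise $\mathrm{Fix}(\tau)=\{y=0\}\cup\{[0:1:0]\}$ and $\mathrm{Fix}(\sigma\tau)=\{z=0\}\cup\{[0:0:1]\}$, where $\sigma\tau=\mathrm{diag}(-1,-1,1)$.

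Next I would intersect each locus with the curve. Substituting the three coordinate vertices $[1:0:0],[0:1:0],[0:0:1]$ into the defining polynomial gives the value $1\neq0$, so the isolated fixed points lie off $C_{a,b,c}$ and every point of $X(C_{a,b,c})$ sits on one of the three coordinate lines. Restricting the equation of $C_{a,b,c}$ to $\{x=0\}$ and dehomogenising ($z=1$) yields the biquadratic $y^{4}+cy^{2}+1=0$, whose solutions are the points $[0:\beta:1]$; the analogous restrictions to $\{y=0\}$ and $\{z=0\}$ give $x^{4}+bx^{2}+1=0$ and $x^{4}+ax^{2}+1=0$, i.e. the points $[\alpha:0:1]$ and $[\delta:1:0]$. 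The key structural observation is that each of these, read through $t=y^{2}$ (resp. $x^{2}$), is a monic quadratic with constant term $1$; by Vieta its two roots multiply to $1$, so the four solutions of each biquadratic form a reciprocal quadruple $\{\pm\beta,\pm\beta^{-1}\}$, and similarly $\{\pm\alpha,\pm\alpha^{-1}\}$ and $\{\pm\delta,\pm\delta^{-1}\}$.

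Finally I would organise these fixed points into $G$-orbits. Each listed point has exactly one vanishing coordinate and is fixed by the unique one among $\sigma,\tau,\sigma\tau$ whose fixed line it lies on, while the remaining two involutions negate a nonzero coordinate and hence displace it; so its stabiliser is the order-$2$ subgroup generated by that involution, and the orbit–stabiliser identity $|Orb_{G}(p)|\,|G_{p}|=|G|$ forces $|Orb_{G}(p)|=2$, e.g. $Orb_{G}[0:\beta:1]=\{[0:\pm\beta:1]\}$. Thus the four roots on $\{x=0\}$ split into the two orbits $Orb_{G}[0:\beta:1]$ and $Orb_{G}[0:\beta^{-1}:1]$, and likewise on the other two lines, producing exactly the six orbits in the statement. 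The main point to verify, and the place where the nondegeneracy hypotheses enter, is twofold: no point is fixed by all of $G$ (the only such points are the three coordinate vertices, already discarded), and the two orbits arising from a single line are genuinely distinct. The latter fails only when $t=\beta^{2}$ satisfies $t=t^{-1}$, i.e. $\beta^{4}=1$, which for the quadratic $t^{2}+ct+1$ means $t=\pm1$, hence $c=\mp2$; this is excluded by the factor $c^{2}-4\neq0$, and symmetrically by $a^{2}-4\neq0$, $b^{2}-4\neq0$, while the remaining factors of the hypothesis secure the smoothness of $C_{a,b,c}$ assumed throughout. This exhausts $X(C_{a,b,c})$ and gives the asserted description.
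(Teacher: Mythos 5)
Your proposal is correct and follows essentially the same route the paper indicates: the paper offers no written-out proof beyond the remark that ``computing the fixed points of the automorphisms of $H$ on $C_{a,b,c}$ and their corresponding orbits gives rise to the result,'' and your argument is precisely that computation carried out in full (eigenspace description of $\mathrm{Fix}(\sigma)$, $\mathrm{Fix}(\tau)$, $\mathrm{Fix}(\sigma\tau)$, restriction to the coordinate lines, the Vieta reciprocal-root observation, and orbit--stabilizer), with the welcome extra care of checking via $a^{2},b^{2},c^{2}\neq4$ that the two orbits on each line are genuinely distinct. As a minor aside, you also silently corrected the paper's typo: the defining equation for $\beta$ should read $y^{4}+cy^{2}+1=0$, as in your derivation, rather than $y^{4}+cx^{2}+1=0$ as printed in the lemma.
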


\begin{remark}
\em{Each of the above orbits satisfies $|\,Orb_H(p)\,|=2.$ Moreover, if a point $[\xi:\epsilon:1]\notin X(C_{a,,b,c}),$ then $|\,Orb_H(p)\,|=4.$}
\end{remark}

\begin{proposition}
Let $A_1:=Orb_{G}[0:\beta:1]\cup Orb_{G}\left[0:\frac{1}{\beta}:1\right],$ then for the quartics $C_{a,b,c}$, we have:
\[A_1\cap W_1\left({C_{a,b,c}}\right)\neq \phi\,\,\,\mathrm{iff}\,\,\,\left(a^2+b^2-abc\right)=0.\]
Moreover,
\[A_1\cap W_1\left({C_{a,b,c}}\right)=\left\{
\begin{array}
[c]{lr}%
Orb_{G}[0:\beta:1],\,\,\,\,\,\,\,\,\,\,\,\,\,\,\,\,\,\,\,\,\,\, if\,\,a=\dfrac{1}{2}\left(bc-b\sqrt{c^2-4}\right) & \\\\
Orb_{G}\left[0:\dfrac{1}{\beta}:1\right],\,\,\,\,\,\,\,\,\,\,\,\,\,\,\,\,if\,\,a=\dfrac{1}{2}\left(bc+b\sqrt{c^2-4}\right) & \\\\
\phi\,\,\,\,\,\,\,\,\,\,\,\,\,\,\,\,\,\,\,\,\,\,\,\,\,\,\,\,\,\,\,\,\,\,\,\,\,\,\,\,\,\,\,\,\,\,\,\,\,\,\,\,\,\,\,\,\, otherwise, &
\end{array}
\right.
\]
where $\beta=\dfrac{\sqrt{-c-\sqrt{c^2-4}}}{\sqrt{2}}.$
\end{proposition}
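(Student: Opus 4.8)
The plan is to convert the statement into a Hessian computation at two explicit points. By Lemma 2.6 the $1$-Weierstrass points of the smooth quartic $C_{a,b,c}$ are exactly its flexes, and by Lemma 2.4 a point $p$ is a flex precisely when $H_F(p)=0$, where $F=x^{4}+y^{4}+z^{4}+ax^{2}y^{2}+bx^{2}z^{2}+cy^{2}z^{2}$. Since $\sigma$ and $\tau$ preserve $C_{a,b,c}$ (the equation is even in each variable), $H$ acts by automorphisms, so Lemma 2.7 shows $W_{1}(C_{a,b,c})$ is $H$-invariant; an orbit therefore either lies entirely in $W_{1}(C_{a,b,c})$ or is disjoint from it. As $Orb_{G}[0:\beta:1]=\{[0:\beta:1],[0:-\beta:1]\}$ and $Orb_{G}[0:\tfrac1\beta:1]=\{[0:\tfrac1\beta:1],[0:-\tfrac1\beta:1]\}$, it suffices to test the two representatives $[0:\beta:1]$ and $[0:\tfrac1\beta:1]$; thus $A_{1}\cap W_{1}(C_{a,b,c})\neq\phi$ iff $H_F$ vanishes at one of them, and the \emph{Moreover} clause records at which one.

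First I would write out the Hessian, whose second partials are $F_{xx}=12x^{2}+2ay^{2}+2bz^{2}$, $F_{yy}=12y^{2}+2ax^{2}+2cz^{2}$, $F_{zz}=12z^{2}+2bx^{2}+2cy^{2}$, $F_{xy}=4axy$, $F_{xz}=4bxz$, $F_{yz}=4cyz$. Evaluating at $[0:\beta:1]$, the coordinate $x=0$ kills $F_{xy}$ and $F_{xz}$, so the Hessian matrix is block diagonal with a $1\times1$ block $2(a\beta^{2}+b)$ and a $2\times2$ block in the $(y,z)$ variables. Expanding the $2\times2$ determinant and reducing with the defining relation $\beta^{4}=-c\beta^{2}-1$ (valid since $\beta$ is a root of $t^{4}+ct^{2}+1=0$), that block collapses to $36\beta^{2}(4-c^{2})$, giving
\[
H_F\big([0:\beta:1]\big)=72\,\beta^{2}\,(4-c^{2})\,(a\beta^{2}+b).
\]
Because $c^{2}\neq4$ by hypothesis and $\beta\neq0$, this vanishes iff $a\beta^{2}+b=0$. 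The identical computation at $[0:\tfrac1\beta:1]$, using that $1/\beta$ is also a root of $t^{4}+ct^{2}+1=0$, gives vanishing iff $a(1/\beta)^{2}+b=0$.

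It remains to repackage these two conditions. Since $\beta^{2}$ and $(1/\beta)^{2}$ are the two roots of $t^{2}+ct+1=0$, Vieta gives $\beta^{2}+(1/\beta)^{2}=-c$ and $\beta^{2}(1/\beta)^{2}=1$, whence the product of the two vanishing conditions is $(a\beta^{2}+b)\big(a(1/\beta)^{2}+b\big)=a^{2}-abc+b^{2}$. Hence at least one factor vanishes iff $a^{2}+b^{2}-abc=0$, which is the first assertion. For the \emph{Moreover} clause I would identify which factor vanishes: substituting $\beta^{2}=\tfrac{-c-\sqrt{c^{2}-4}}{2}$ and using $(c-\sqrt{c^{2}-4})(c+\sqrt{c^{2}-4})=4$ shows $a=\tfrac12(bc-b\sqrt{c^{2}-4})$ makes $a\beta^{2}+b=0$, selecting $Orb_{G}[0:\beta:1]$; the twin computation with $(1/\beta)^{2}=\tfrac{-c+\sqrt{c^{2}-4}}{2}$ gives $a=\tfrac12(bc+b\sqrt{c^{2}-4})$, selecting $Orb_{G}[0:\tfrac1\beta:1]$. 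These are the two roots of $a^{2}-bca+b^{2}=0$ in $a$, and they are distinct because $\beta^{2}=(1/\beta)^{2}$ would force $\beta^{4}=1$, hence $c=\pm2$, excluded by $c^{2}\neq4$; so $-b/a$ matches at most one root and exactly one orbit is selected, leaving $A_{1}\cap W_{1}(C_{a,b,c})=\phi$ in every other case.

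The differentiation, the block-determinant expansion, and the final sign-matching are all elementary; the one step demanding care is the reduction of the $2\times2$ block via $\beta^{4}=-c\beta^{2}-1$, which is exactly what makes the Hessian collapse to the clean factor $a\beta^{2}+b$ and thereby lets Vieta produce the symmetric invariant $a^{2}+b^{2}-abc$. I would also verify at the outset that $\beta$, $1/\beta$ are genuine points of $C_{a,b,c}$ and that the excluded factors in the standing hypothesis are precisely those invoked above.
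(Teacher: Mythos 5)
Your proof is correct, and it reaches the same conclusions through the same basic strategy as the paper (flex criterion via the Hessian, Lemmas 2.4, 2.6, 2.7, evaluation at the four points of $C_{a,b,c}\cap\{x=0\}$), but the execution differs in a worthwhile way. The paper works with the restricted polynomial $H_F(0,y,1)=24\left(b+ay^2\right)\left(\left(12-c^2\right)y^2+2c\left(1+y^4\right)\right)$ and invokes the machine-computed resultant $Res\left(H_F(0,y,1),F(0,y,1);y\right)=2985984\left(a^2+b^2-abc\right)^2\left(c^2-4\right)^4$ to get the iff-condition all at once, then settles the \emph{Moreover} clause by substituting each value of $a$ back into $H_F$ and checking vanishing at $\pm\beta$ versus $\pm\frac{1}{\beta}$. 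You instead evaluate the Hessian pointwise, reduce with $\beta^4=-c\beta^2-1$ to get the clean factorization $H_F\big([0:\beta:1]\big)=72\,\beta^{2}\,(4-c^{2})\,(a\beta^{2}+b)$ (which I have checked and which agrees with the paper's restricted Hessian), and then recover $a^2+b^2-abc$ as the Vieta product $(a\beta^{2}+b)\big(a\beta^{-2}+b\big)$. This buys a fully hand-verifiable derivation in which the symmetric invariant $a^2+b^2-abc$ appears for a transparent reason rather than out of a resultant computation, and your explicit check that $\beta^2\neq\beta^{-2}$ (using $c^2\neq4$) makes the exclusivity of the two cases cleaner than the paper's bare substitution. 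One caveat inherited from the statement itself: when $a=b=0$ both vanishing conditions $a\beta^{2}+b=0$ and $a\beta^{-2}+b=0$ hold simultaneously, so your claim that \enquote{exactly one orbit is selected} (which tacitly divides by $a$) fails there and both orbits lie in $W_1$; the paper's trichotomy has the same degeneracy, and that situation is handled separately in Theorem 2.7, so this is a shared boundary case rather than a gap specific to your argument, though a sentence excluding $a=b=0$ would make your last paragraph airtight.
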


\begin{proof}
The Hessian $H_F(0,y,1)$ is given by the equation \[H_F(0,y,1)=24\left(b+ay^2\right)\left(\left(12-c^2\right)y^2+2c\left(1+y^4\right)\right).\]
So, the resultant of $H_F(0,y,1)$ and $F(0,y,1)$ with respect to $y$ is \[Res\left(H_F(0,y,1),F(0,y,1);y\right)=2985984\left(a^2+b^2-abc\right)^2\left(c^2-4\right)^4,\]
Thus, since $c\neq\pm2,$ \[A_1\cap W_1\left({C_{a,b,c}}\right)\neq \phi\,\,\,\text{iff}\,\,\,\left(a^2+b^2-abc\right)=0.\]
Moreover, the equation $y^4+cy^2+1=0$ has four solutions of the form $\pm\beta,\,\pm\dfrac{1}{\beta},$ where $\beta=\dfrac{\sqrt{-c-\sqrt{c^2-4}}}{\sqrt{2}}.$ Hence, substituting $a=\dfrac{1}{2}\left(bc-b\sqrt{c^2-4}\right)$ into the Hessian equation $H_F(0,y,1)$ yields
\[
H_F[0:\pm\beta:1)=0,\,\,\,\, H_F\left[0:\pm\dfrac{1}{\beta}:1\right]\neq0.
\]
On the other hand, substituting $a=\dfrac{1}{2}\left(bc+b\sqrt{c^2-4}\right)$ into the Hessian equation $H_F(0,y,1)$ yields
\[
H_F[0:\pm\beta:1)\neq0,\,\,\, H_F\left[0:\pm\dfrac{1}{\beta}:1\right]=0,
\]
which completes the proof.
\end{proof}

\begin{proposition}
Let $A_2:=Orb_{G}[\alpha:0:1]\cup Orb_{G}[\dfrac{1}{\alpha}:0:1],$ then for the quartics $C_{a,b,c}$, we have:
\[A_2\cap W_1\left({C_{a,b,c}}\right)\neq \phi\,\,\,\mathrm{iff}\,\,\,\left(a^2+c^2-abc\right)=0.\]
Moreover,
\[A_2\cap W_1\left({C_{a,b,c}}\right)=\left\{
\begin{array}
[c]{lr}%
Orb_{G}[\alpha:0:1],\,\,\,\,\,\,\,\,\,\,\,\,\,\,\,\,\,\,\,\,\,\, if\,\,a=\frac{1}{2} \left(bc-c\sqrt{b^2-4}\right) & \\\\
Orb_{G}[\dfrac{1}{\alpha}:0:1],\,\,\,\,\,\,\,\,\,\,\,\,\,\,\,\,\,\,\,\,if\,\,a=\frac{1}{2} \left(bc+c\sqrt{b^2-4}\right) & \\\\
\phi\,\,\,\,\,\,\,\,\,\,\,\,\,\,\,\,\,\,\,\,\,\,\,\,\,\,\,\,\,\,\,\,\,\,\,\,\,\,\,\,\,\,\,\,\,\,\,\,\,\,\,\,\,\,\,\, otherwise, &
\end{array}
\right.
\]
where $\alpha=\dfrac{\sqrt{-b-\sqrt{b^2-4}}}{\sqrt{2}}$
\end{proposition}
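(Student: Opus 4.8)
The plan is to deduce this statement from Proposition 2.3 by exploiting a symmetry of the family $\{C_{a,b,c}\}$, rather than repeating the Hessian--resultant computation. The key observation is that the coordinate swap $\psi:[x:y:z]\mapsto[y:x:z]$ is a projective isomorphism carrying $C_{a,b,c}$ onto $C_{a,c,b}$: substituting $(y,x,z)$ into the defining polynomial interchanges the coefficients $b$ and $c$ while fixing $a$. Since flexes are defined through the contact order of the tangent line (Definition 1.4) and are thus preserved by projective transformations, and since flexes coincide with $1$-Weierstrass points by Lemma 1.6, we obtain $\psi\big(W_1(C_{a,b,c})\big)=W_1(C_{a,c,b})$.

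First I would track where $A_2$ is sent under $\psi$. As $\alpha$ is a root of $x^4+bx^2+1=0$, the point $[\alpha:0:1]$ lies on the slice $y=0$, and $\psi([\alpha:0:1])=[0:\alpha:1]$ lies on the slice $x=0$ of $C_{a,c,b}$. On $C_{a,c,b}$ that slice is cut out by $y^4+by^2+1=0$---its third parameter being $b$---which is exactly the equation defining $\alpha$. Hence $\psi$ identifies $A_2$ for $C_{a,b,c}$ with the set $A_1$ for $C_{a,c,b}$, matching the orbits of $[\alpha:0:1]$ and $[\tfrac1\alpha:0:1]$ with those of $[0:\beta:1]$ and $[0:\tfrac1\beta:1]$.

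Next I would invoke Proposition 2.3 for $C_{a,c,b}$. Its non-vanishing hypothesis is symmetric in $b$ and $c$, so $C_{a,c,b}$ satisfies the standing assumptions exactly when $C_{a,b,c}$ does. Proposition 2.3 then yields a nonempty intersection iff $a^2+c^2-acb=0$, that is, iff $a^2+c^2-abc=0$, and it singles out which orbit meets $W_1$ according to whether $a=\tfrac12(cb-c\sqrt{b^2-4})$ or $a=\tfrac12(cb+c\sqrt{b^2-4})$. Transporting these conclusions back through $\psi^{-1}$ reproduces the three cases together with the stated value $\alpha=\frac{\sqrt{-b-\sqrt{b^2-4}}}{\sqrt 2}$.

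The step requiring the most care is the parameter bookkeeping: one must verify that the conclusion of Proposition 2.3, written in the parameters $(a,c,b)$ of $C_{a,c,b}$, relabels correctly into the $(a,b,c)$ of $C_{a,b,c}$, so that $b$ and $c$ land in the right slots of both the defining condition and the radicals. For a self-contained alternative, one may mirror the proof of Proposition 2.3 verbatim on the slice $y=0$: compute $H_F(x,0,1)=24\big(c+ax^2\big)\big((12-b^2)x^2+2b(1+x^4)\big)$, form $\mathrm{Res}\big(H_F(x,0,1),F(x,0,1);x\big)=2985984\,(a^2+c^2-abc)^2(b^2-4)^4$, use $b\neq\pm2$ to read off the condition, and conclude by substituting the two candidate values of $a$ into $H_F$ to determine which of $\pm\alpha,\pm\tfrac1\alpha$ are annihilated. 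On this route the only genuine obstacle is the algebraic simplification of the resultant into the clean factored form displayed above.
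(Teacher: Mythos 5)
Your proof is correct, but it takes a genuinely different route from the paper's. The paper proves this proposition by direct computation on the slice $y=0$, exactly as in your fallback: it records $H_F(x,0,1)=24\left(ax^2+c\right)\left((12-b^2)x^2+2b(x^4+1)\right)$, computes $\mathrm{Res}\left(H_F(x,0,1),F(x,0,1);x\right)=2985984\left(a^2+c^2-abc\right)^2\left(b^2-4\right)^4$, invokes $b\neq\pm2$ to obtain the equivalence, and then substitutes the two candidate values of $a$ into the Hessian to decide which of $\pm\alpha,\,\pm\tfrac{1}{\alpha}$ are annihilated. Your primary argument instead reduces the statement to Proposition 2.3 via the swap $\psi:[x:y:z]\mapsto[y:x:z]$, and the reduction is sound on all the points that matter: $\psi$ carries $C_{a,b,c}$ onto $C_{a,c,b}$ (it fixes $a$ and exchanges $b$ and $c$ in the defining polynomial); it preserves contact orders and hence flexes, which by Lemma 1.6 are precisely the $1$-Weierstrass points of a smooth quartic; it conjugates the group $H$ to itself (interchanging $\sigma$ and $\tau$), so it maps $H$-orbits to $H$-orbits and identifies $A_2(C_{a,b,c})$ with $A_1(C_{a,c,b})$; the standing nondegeneracy hypothesis is symmetric in $b$ and $c$; and the relabeling $(a,b,c)\mapsto(a,c,b)$ turns the condition $a^2+b^2-abc=0$ of Proposition 2.3 into $a^2+c^2-abc=0$, the branch conditions into $a=\tfrac{1}{2}\left(bc\mp c\sqrt{b^2-4}\right)$, and $\beta$ into $\frac{\sqrt{-b-\sqrt{b^2-4}}}{\sqrt{2}}=\alpha$, as required. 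What your route buys: it avoids a redundant resultant computation and makes transparent why Propositions 2.3, 2.4 and 2.5 are formally identical up to the coordinate-permutation symmetry of the family (the same trick yields Proposition 2.5 from 2.3 as well). What the paper's route buys: it is self-contained, independent of the correctness of Proposition 2.3, and produces the explicit factored resultant in the $(x,0,1)$ slice, with the only nontrivial labor being the simplification you flag.
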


\begin{proof}
The Hessian $H_F(x,0,1)$ is given by the equation \[H_F(x,0,1)=24 \left(a x^2+c\right) \left(-b^2 x^2+12 x^2+2 b
   \left(x^4+1\right)\right.\]
So, the resultant of $H_F(x,0,1)$ and $F(x,0,1)$ with respect to $x$ is \[Res\left(H_F(x,0,1),F(x,0,1);x\right)=2985984\left(a^2+c^2-abc\right)^2\left(b^2-4\right)^4.\]
Hence, since $b\neq\pm2,$ \[A_2\cap W_1\left({C_{a,b,c}}\right)\neq \phi\,\,\,\text{iff}\,\,\,\left(a^2+c^2-abc\right)=0.\]
Moreover, substituting $a=\frac{1}{2} \left(bc-c\sqrt{b^2-4}\right)$ into the Hessian equation $H_F(x,0,1)$ yields
\[
H_F[\pm\alpha:0:1)=0,\,\,\, H_F\left[\pm\dfrac{1}{\alpha}:0:1\right]\neq0.
\]
Also, substituting $a=\frac{1}{2} \left(bc+c\sqrt{b^2-4}\right)$ yields
\[
H_F[\pm\alpha:0:1)\neq0,\,\,\, H_F\left[\pm\dfrac{1}{\alpha}:0:1\right]=0,
\]
we are done.
\end{proof}

\begin{proposition}
Let $A_3:=Orb_{G}[\delta:1:0]\cup Orb_{G}[\dfrac{1}{\delta}:1:0],$ then for the quartics $C_{a,b,c}$, we have:
\[A_3\cap W_1\left({C_{a,b,c}}\right)\neq \phi\,\,\,\mathrm{iff}\,\,\,\left(b^2+c^2-abc\right)=0.\]
Moreover,
\[A_3\cap W_1\left({C_{a,b,c}}\right)=\left\{
\begin{array}
[c]{lr}%
Orb_{G}[\delta:1:0],\,\,\,\,\,\,\,\,\,\,\,\,\,\,\,\,\,\,\,\,\,\, if\,\,a=\frac{1}{2} \left(bc-c\sqrt{b^2-4}\right) & \\\\
Orb_{G}[\dfrac{1}{\delta}:1:0],\,\,\,\,\,\,\,\,\,\,\,\,\,\,\,\,\,\,\,\,if\,\,a=\frac{1}{2} \left(bc+c\sqrt{b^2-4}\right) & \\\\
\phi\,\,\,\,\,\,\,\,\,\,\,\,\,\,\,\,\,\,\,\,\,\,\,\,\,\,\,\,\,\,\,\,\,\,\,\,\,\,\,\,\,\,\,\,\,\,\,\,\,\,\,\,\,\,\,\, otherwise, &
\end{array}
\right.
\]
where $\delta=\dfrac{\sqrt{-a-\sqrt{a^2-4}}}{\sqrt{2}}$
\end{proposition}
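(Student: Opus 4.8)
The plan is to follow the same strategy as in the proofs of Propositions 2.3 and 2.4, now specialized to the line $z=0$, on which all points of $A_3$ lie. The curve meets $z=0$ in the four points $[\pm\delta:1:0]$ and $[\pm\tfrac{1}{\delta}:1:0]$ cut out by $F(x,1,0)=x^{4}+ax^{2}+1=0$, whose roots are $\pm\delta,\pm\tfrac1\delta$. Since the $1$-Weierstrass points of a smooth plane quartic are exactly its flexes, and a point is a flex precisely when the Hessian $H_{F}$ vanishes there, the whole question reduces to comparing the zeros of $H_{F}(x,1,0)$ with those of $F(x,1,0)$.

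First I would compute the restricted Hessian. Setting $z=0$ in the $3\times3$ Hessian matrix decouples the $z$-row and $z$-column, so the matrix becomes block-diagonal and its determinant factors by hand as
\[
H_{F}(x,1,0)=24\,(bx^{2}+c)\,\bigl(2a(x^{4}+1)+(12-a^{2})x^{2}\bigr).
\]
Next I would form $\mathrm{Res}\bigl(H_{F}(x,1,0),F(x,1,0);x\bigr)$ and expect it, in exact analogy with Propositions 2.3 and 2.4, to equal a nonzero numerical constant times $(a^{2}-4)^{4}\,(b^{2}+c^{2}-abc)^{2}$. Because $a\neq\pm2$, this resultant vanishes if and only if $b^{2}+c^{2}-abc=0$, which establishes the stated iff criterion.

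For the ``Moreover'' part the decisive observation is an evaluation trick: at any root $\rho$ of $F(x,1,0)$ one has $\rho^{4}+1=-a\rho^{2}$, so the second factor of $H_{F}$ collapses to $(12-3a^{2})\rho^{2}=3(4-a^{2})\rho^{2}$, which is nonzero since $a\neq\pm2$ and $\rho\neq0$. Hence the flex condition at such a point reduces to the single equation $b\rho^{2}+c=0$, i.e. $\rho^{2}=-c/b$. As $\delta^{2}$ and $1/\delta^{2}$ are the two roots of $u^{2}+au+1=0$, it only remains to decide which of them equals $-c/b$ once $b^{2}+c^{2}-abc=0$ holds. Solving this criterion as a quadratic in $b$ gives $b=\tfrac12\bigl(ac\mp c\sqrt{a^{2}-4}\bigr)$, and a short rationalization shows that the branch $b=\tfrac12(ac-c\sqrt{a^{2}-4})$ forces $-c/b=\delta^{2}$, so the flex orbit is $Orb_{G}[\delta:1:0]$, while $b=\tfrac12(ac+c\sqrt{a^{2}-4})$ forces $-c/b=1/\delta^{2}$, giving $Orb_{G}[\tfrac1\delta:1:0]$; otherwise the intersection is empty. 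I expect the only genuine bookkeeping to be this final matching of $-c/b$ to the correct root, since both the resultant factorization and the collapse of the second Hessian factor are made transparent by the block structure and the evaluation trick.
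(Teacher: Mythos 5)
Your proposal is correct and follows essentially the same route as the paper's proof: restrict the Hessian to $z=0$, obtain the factorization $H_F(x,1,0)=24\,(bx^{2}+c)\bigl(2a(x^{4}+1)+(12-a^{2})x^{2}\bigr)$, use the resultant $2985984\,(b^{2}+c^{2}-abc)^{2}(a^{2}-4)^{4}$ for the iff (valid since $a\neq\pm2$), and then identify the flex orbit by branch; your evaluation trick $\rho^{4}+1=-a\rho^{2}$, which collapses the second factor to $3(4-a^{2})\rho^{2}\neq0$ and reduces the flex condition to $b\rho^{2}+c=0$, is merely a cleaner hand verification of the substitutions the paper carries out directly. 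One incidental point: like the paper's own proof, you solve $P_{3}(a,b,c)=0$ for $b$, obtaining $b=\tfrac{1}{2}\bigl(ac\mp c\sqrt{a^{2}-4}\bigr)$ and matching $-c/b$ to $\delta^{2}$ or $1/\delta^{2}$, which confirms that the branch conditions printed in the proposition statement (written in terms of $a$ and $\sqrt{b^{2}-4}$, apparently carried over from Proposition 2.3) are a typo in the statement rather than a gap in your argument.
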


\begin{proof}
The Hessian $H_F(x,1,0)$ is given by the equation \[H_F(x,1,0)=24 \left(b X^2+c\right) \left(-a^2 x^2+12 x^2+2 a
   \left(x^4+1\right)\right).\]
So, the resultant of $H_F(x,1,0)$ and $F(x,1,0)$ with respect to $x$ is \[Res\left(H_F(x,1,0),F(x,1,0);x\right)=2985984\left(b^2+c^2-abc\right)^2\left(a^2-4\right)^4 ,\]
Hence, since $a\neq\pm2,$ \[A_3\cap W_1\left({C_{a,b,c}}\right)\neq \phi\,\,\,\text{iff}\,\,\,b^2+c^2-abc=0.\]
Moreover, substituting $b=\frac{1}{2} \left(ac-c\sqrt{a^2-4} \right)$ into the Hessian equation $H_F(x,1,0)$ yields
\[
H_F[\pm\delta:1:0)=0,\,\,\, H_F\left[\pm\dfrac{1}{\delta}:1:0\right]\neq0.
\]
Also, substituting $b=\frac{1}{2} \left(ac+c\sqrt{a^2-4} \right)$ yields
\begin{eqnarray*}
H_F[\pm\delta:1:0)\neq0,\,\,\, H_F\left[\pm\dfrac{1}{\delta}:1:0\right]=0,
\end{eqnarray*}
which completes the proof.
\end{proof}

\begin{proposition}
If $X(C_{a,b,c})\cap W_{1}(C_{a,b,c})\neq\phi$, then the intersection
points are necessarily hyperflex points.
\end{proposition}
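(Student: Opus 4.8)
The plan is to exploit the fact that every point of $X(C_{a,b,c})$ sits on a coordinate axis and is fixed by a non-trivial involution of $H$, and to combine this with the invariance of the tangent line's intersection divisor under that involution. By Lemma 2.1 every $p\in X(C_{a,b,c})$ lies in one of the six displayed orbits, so exactly one of its homogeneous coordinates vanishes; since $\sigma$ and $\tau$ only change signs, I may replace $p$ by one of the representatives $[0:\beta:1]$, $[\alpha:0:1]$ or $[\delta:1:0]$, and by Lemma 1.8 the gap sequence, hence the flex type (Lemma 1.6), is constant along each orbit, so it suffices to treat one representative per case. I would carry out the case $p=[0:\beta:1]$ in full; the remaining two are identical after permuting the roles of $(x,y,z)$ and of $(a,b,c)$.

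First I would record the elementary geometric input. Since $p=[0:\beta:1]$ has vanishing $x$-coordinate, it is fixed by the involution $\sigma=\mathrm{diag}(-1,1,1)\in H$, whose fixed locus in $\mathbb{P}^2$ is the axis $\{x=0\}$ together with the isolated vertex $v=[1:0:0]$; note $F(1,0,0)=1\neq 0$, so $v\notin C_{a,b,c}$. Because $F_x=2x\,(2x^2+ay^2+bz^2)$ vanishes identically on the axis $x=0$, the tangent line $L_p\colon F_x(p)\,x+F_y(p)\,y+F_z(p)\,z=0$ carries no $x$-term and therefore passes through $v$. Thus $L_p$ contains the two $\sigma$-fixed points $v$ and $p$; moreover $L_p$ cannot be the axis $\{x=0\}$ since $v\in L_p$ but $v\notin\{x=0\}$, so $\sigma$ restricts to a genuine involution of $L_p\cong\mathbb{P}^1$ whose only fixed points are $v$ and $p$.

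The heart of the argument is the residual-intersection step. As $\sigma$ preserves both $C_{a,b,c}$ and $L_p$, it preserves the degree-$4$ divisor $C_{a,b,c}\cap L_p$; writing $m=I_p(C_{a,b,c},L_p)$, it then preserves the residual divisor $R$ of degree $4-m$ supported away from $p$. Since $p\in W_1(C_{a,b,c})$ is a flex, Definition 1.4 gives $m\geq 3$, so $\deg R\leq 1$. If $p$ were an ordinary flex then $m=3$ and $R=q$ is a single reduced point; $\sigma$-invariance of $R$ forces $\sigma(q)=q$, so $q$ is a $\sigma$-fixed point of $L_p$, i.e. $q\in\{v,p\}$. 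But $q\neq p$ while $q\in C_{a,b,c}$ and $v\notin C_{a,b,c}$, a contradiction. Hence $m=4$: the tangent meets $C_{a,b,c}$ only at $p$, with contact order $4$, and by Lemma 1.6 the point is a hyperflex.

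Finally I would observe that the same computation applies verbatim to the representatives $[\alpha:0:1]$ and $[\delta:1:0]$, using the involutions $\tau=\mathrm{diag}(1,-1,1)$ and $\sigma\tau=\mathrm{diag}(-1,-1,1)$ with isolated fixed vertices $[0:1:0]$ and $[0:0:1]$; in each case the relevant partial derivative vanishes on the corresponding axis, so $L_p$ passes through the vertex, which again lies off $C_{a,b,c}$ because $F=1$ there. The step needing most care — and the only real obstacle — is confirming that the fixing involution acts \emph{non-trivially} on $L_p$ with exactly the two expected fixed points, so that the residual point is genuinely pinned to $\{v,p\}$; this is precisely what the incidence $v\in L_p$ together with $v\notin\{x=0\}$ guarantees, since the only line contained in the fixed locus of $\sigma$ is the axis itself.
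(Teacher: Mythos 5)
Your proof is correct, but it takes a genuinely different route from the paper's. The paper argues by direct computation: it writes the tangent at the representative $[0:\beta:1]$ explicitly as $y-\beta=0$, computes $\mathrm{Res}\big(C_{a,b,c},L_{[0:\beta:1]};y\big)=x^2\left(x^2+a\beta^2+b\right)$, and then substitutes the parameter relation $a=\frac{1}{2}\left(bc-b\sqrt{c^2-4}\right)$ from Proposition 2.2 (the condition under which the point is a 1-Weierstrass point in the first place) to watch the resultant collapse to $x^4$, giving contact order $4$. You replace this entire computation with a symmetry argument: the involution of $H$ fixing $p$ preserves the degree-$4$ divisor $C\cdot L_p$, so an ordinary flex would leave a single $\sigma$-invariant residual point, necessarily one of the two fixed points of $\sigma$ on $L_p$, and both options ($p$ itself, or the vertex $v$ with $F(v)=1\neq0$) are impossible. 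Your approach buys uniformity and generality: it is parameter-free, treats all six orbits identically, and in effect shows that any flex lying on a coordinate axis of such a symmetric quartic is automatically a hyperflex, whereas the paper carries out only one representative and one branch of the parameter condition, leaving the rest implicit. The paper's computation buys something too: the explicit factor $x^2+a\beta^2+b$ makes the hyperflex condition visible and doubles as a consistency check against Proposition 2.2. The one delicate step in your argument --- that $\sigma$ acts nontrivially on $L_p$ with fixed points exactly $\{v,p\}$ --- you handle correctly via the incidences $v\in L_p$ (since $F_x=2x\left(2x^2+ay^2+bz^2\right)$ vanishes on the axis) and $v\notin\{x=0\}$, which rule out $L_p$ being the axis.
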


\begin{proof}
It suffices to prove the result for the point $[0:\beta:1].$ Indeed, the tangent line is given by \[L_{[0:\beta:1]}: y-\beta=0.\]
The resultant of $L_{[0:\beta:1]}$ and $C_{a,b,c}$ with respect to $y$ is given by \[Res\big(C_{a,b,c},L_{[0:\beta:1]};y\big)=x^2(x^2+a\beta^2+b).\]
Now, substituting $\beta=\dfrac{\sqrt{-c-\sqrt{c^2-4}}}{\sqrt{2}}$ and $a=\dfrac{1}{2}\left(bc-b\sqrt{c^2-4}\right)$ into the last equation yields\[Res\big(C_{a,b,c},L_{[0:\beta:1]};y\big)=x^4.\]
So, $L_{[0:\beta:1]}$ meets $C_{a,b,c}$ at $[0:\beta:1]$ with contact order 4.
\end{proof}
\noindent\textbf{Notations}\begin{itemize}
\item Let $O_{r}$ \ be the orbits classification, where $O$ denotes the number of
orbits and $r$ the number of points in these orbits. For example $2_{4}$
means: two orbits each of 4 points.
\item Let \[P_1(a,b,c):=a^2+b^2-abc,\,\,\,P_2(a,b,c):=a^2+c^2-abc,\,\,\, P_3(a,b,c):=b^2+c^2-abc.\] The set of
common zeros of the equations $P_i(a,b,c)=0$ and $P_j(a,b,c)=0$ such that $i\neq j$ will be denoted by $\Gamma_{ij}.$
   \item Let \[P(a,b):=a^{2}+b^{2}-ab^{2},\text{ \ \ \ }Q(a,b):=36-12a+a^{2}-10b^{2}%
       +3ab^{2}.\] The set of common zeros of the equations $P(a,b)=0\,\,\text{and}\,\,Q(a,b)=0$
will be denoted by $\Gamma.$
                   \end{itemize}
                   \newpage
\noindent Now, we have the following cases:
\begin{description}
  \item[I.] At least two of the parameters vanish,
  \item[II.]The parameters $a,b,c$ satisfy $\Gamma_{ij}$, where $(ij)\in S_2$,
  \item[III.]$(ijk)\in S_3$ such that $P_i(a,b,c)=0$ and $P_j(a,b,c)P_k(a,b,c)\neq0,$
  \item[IV.] If $P_1(a,b,c)P_2(a,b,c)P_3(a,b,c)\neq0$.
\end{description}
The following four theorems treat the above four cases.
\begin{theorem}
Suppose that two of the parameters $a,b,c$ vanish, for instance $b=0,$ $c=0,$ then the number and the geometry of the 1-Weierstrass points for the quartics $C_{a,0,0}$ are classified as follows.  \begin{center}
  \textit{Number and Orbit Classification on $C_{a,0,0}$}%
\end{center}
\[%
\begin{tabular}
[c]{|c|c|c|}\hline
          &\emph{Ordinary flexes} & \emph{Hyperflexes}\\\hline
$a=0,\,6$ &$\mathbf{0}$           & $\mathbf{12}$\\
          &                       & $2_2,\,2_{4}$\\\hline
Otherwise          &$\mathbf{16}$          & $\mathbf{4}$\\
          & $4_{4}$               & $2_{2}$\\\hline
\end{tabular}
\]
where the boldface numbers denote to the number of the points.

\end{theorem}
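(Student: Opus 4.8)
The plan is to locate the flexes of $C_{a,0,0}$ from its Hessian, then separate ordinary flexes from hyperflexes using Proposition~2.6 together with the global weight count, reading off the degenerate values of $a$ as the loci where the Hessian acquires a repeated factor. First I would put $b=c=0$ in $F=x^{4}+y^{4}+z^{4}+ax^{2}y^{2}$ and compute $H_F$ as in Lemma~1.5. Because $z$ occurs only through $z^{4}$, one has $F_{xz}=F_{yz}=0$ and $F_{zz}=12z^{2}$, so the Hessian matrix is block diagonal and
\[
H_F=144\,z^{2}\big(2a(x^{4}+y^{4})+(12-a^{2})x^{2}y^{2}\big).
\]
Writing $G:=2a(x^{4}+y^{4})+(12-a^{2})x^{2}y^{2}$, the Hessian criterion (Lemma~1.5) and the fact that the $1$-Weierstrass points of a quartic are its flexes (Lemma~1.6) identify them with $C_{a,0,0}\cap\{H_F=0\}$, which splits into the part on $z=0$ and the part on $G=0$.

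Next I would dispatch the line $z=0$, where $C_{a,0,0}$ reduces to $x^{4}+y^{4}+ax^{2}y^{2}=0$, i.e. the four points $[\delta:1:0]$ with $\delta^{4}+a\delta^{2}+1=0$. These are exactly the points of $X(C_{a,0,0})$ lying on $z=0$ (they are fixed by $\sigma\tau$), so by Lemma~2.1 and Proposition~2.6 each is a hyperflex. Since $a\neq\pm2$ the four roots are distinct and of the shape $\{\pm\delta_{0},\pm\delta_{0}^{-1}\}$; as $\sigma\tau$ fixes the whole line while $\sigma$ sends $\delta\mapsto-\delta$, they fall into the two $H$-orbits $\{[\pm\delta_{0}:1:0]\}$ and $\{[\pm\delta_{0}^{-1}:1:0]\}$. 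This contributes $2_{2}$ and weight $8$, independently of $a$.

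Then I would analyse $G=0$. Setting $u=x/y$ turns it into $2a\,u^{4}+(12-a^{2})u^{2}+2a=0$, a biquadratic whose discriminant in $u^{2}$ is $(a^{2}-4)(a^{2}-36)$. For $a\notin\{0,\pm2,\pm6\}$ this yields four distinct lines $x=u_{j}y$ through $[0:0:1]\notin C_{a,0,0}$, each cutting the curve in four distinct points with $z\neq0$; comparing the two defining equations shows that a common solution with $z=0$ would force $a=\pm2$, so all $16$ of these points are simple and disjoint from the four hyperflexes above. The total $1$-weight being $N^{(1)}(C_{a,0,0})=g(g^{2}-1)=24$ (Lemma~1.7) and the hyperflexes accounting for $8$, each of the $16$ must carry weight $1$, hence is an ordinary flex; the $H$-action $[u_{j}:1:z_{0}]\mapsto[\pm u_{j}:1:\pm z_{0}]$ sorts them into four orbits of size $4$, i.e. $4_{4}$. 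This is the ``Otherwise'' row.

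Finally I would handle the degenerate values, noting first that $y\mapsto iy$ gives $C_{a,0,0}\cong C_{-a,0,0}$, so it is enough to treat $a=0$ and $a=6$. At $a=6$, $G=12(x^{2}-y^{2})^{2}$ is a pair of double lines $x=\pm y$, and the sixteen simple flexes collapse onto the eight points $[1:\pm1:\zeta]$ with $\zeta^{4}=-8$; since $x=\pm y$ meets $C_{a,0,0}$ transversally there, the squared factor gives $\omega^{(1)}(p)=I_p(C_{a,0,0},H_F)=2$, so each is a hyperflex, and the $H$-orbits now mix the two lines into $2_{4}$, yielding twelve hyperflexes $2_{2},2_{4}$. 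At $a=0$ the curve is the Fermat quartic, $G=12x^{2}y^{2}$, and the same multiplicity argument (the squared factors $x^{2},y^{2},z^{2}$ each meeting $C_{a,0,0}$ transversally) upgrades all twelve flexes to hyperflexes. The hard part will be the orbit bookkeeping at $a=0$: under $H$ these twelve points split as $6_{2}$, and even enlarging $H$ by the involution $x\leftrightarrow y$ (which preserves every $C_{a,0,0}$) only produces $3_{4}$, so recovering the stated $2_{2},2_{4}$ forces one to use the full enhanced symmetry of the Fermat quartic; identifying the precise group under which $a=0$ and $a=6$ genuinely share the label $2_{2},2_{4}$, and verifying the weight-$2$ upgrade $\omega^{(1)}(p)=I_p=2$ at each collapsed point, is the step I expect to be most delicate, whereas at $a=6$ the classification $2_{2},2_{4}$ already holds for $H$ itself.
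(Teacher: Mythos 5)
Your proposal takes a genuinely different route from the paper's, and where the two diverge your computation is the more trustworthy one. The paper's proof is essentially a citation: it imports the number table ($0+12$ for $a=0,6$; $16+4$ otherwise) wholesale from \cite{pa20}, observes that the four points on $z=0$ are always flexes (with $b=c=0$ one has $P_3\equiv 0$, so this is the $A_3$-proposition, Proposition 2.5 --- the proof's citation of the $A_1$-statement and of $[0:\beta:1]$ is a slip, since $P_1=a^2$ vanishes only at $a=0$), upgrades them to $2_2$ hyperflexes by Proposition 2.6, and then fills in the orbit tables by the size-$2$/size-$4$ dichotomy of Remark 2.2. You instead derive everything from scratch: I checked your Hessian factorization $H_F=144z^2\bigl(2a(x^4+y^4)+(12-a^2)x^2y^2\bigr)$, the elimination giving $(12-3a^2)u^2=0$ for a common root with $z=0$ (hence $a=\pm2$), the discriminant $(a^2-4)(a^2-36)$, and the weight count $24=4\cdot2+16\cdot1$ from Lemma 1.7, and all are correct. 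Your approach buys two things the paper's does not: it reproves the imported table rather than assuming it, and it makes visible that $a=-6$ is degenerate too (your reduction $y\mapsto iy$ is $H$-equivariant since $\mathrm{diag}(1,i,1)$ commutes with $\sigma,\tau$), so the theorem's ``Otherwise'' row literally, and wrongly, includes $a=-6$. A small simplification: at $a=6$ you do not need the identity $\omega^{(1)}(p)=I_p(C,H_F)$, which the paper never states; since each flex has weight at most $2$ (Lemma 1.6) and the four $z=0$ hyperflexes carry weight $8$, the remaining eight points must each have weight exactly $2$ by Lemma 1.7 alone.

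The step you flag as delicate at $a=0$ is not a gap in your argument --- it is an error in the theorem's table, and you should conclude accordingly rather than search for a rescue. At $a=b=c=0$ all of $P_1=P_2=P_3=a^2$ vanish, so the twelve hyperflexes of the Fermat quartic are exactly the twelve points of $X(C_{0,0,0})$ from Lemma 2.1; each is fixed by one of $\sigma,\tau,\sigma\tau$, so every orbit under the paper's group $H$ has size $2$ and the correct classification is $6_2$, exactly as you computed. No enlargement helps in a way consistent with the paper's setup: the full automorphism group of the Fermat quartic puts all twelve hyperflexes in a single orbit, your order-$8$ extension gives $3_4$, and although an unrelated cyclic group such as $\langle\mathrm{diag}(i,-1,1)\rangle$ would yield $2_2,2_4$, the theorem's tables are stated for the fixed $H$ of Section 2. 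The paper's own proof breaks at precisely this spot: the appeal to ``Proposition 2.1(4)'' tacitly assumes the eight hyperflexes off $z=0$ lie outside $X(C_{a,0,0})$ and hence in size-$4$ orbits, which is true at $a=\pm6$ but false at $a=0$. So finish your proof by stating $6_2$ at $a=0$ and $2_2,2_4$ at $a=\pm6$; the printed entry $2_2,2_4$ for $a=0$ cannot be certified because it is false.
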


\begin{proof}
If we assume, without any loss of generality, that $b=0,\,\,c=0$, then the number of the 1-Weierstrass points of $C_{a,0,0}$ is given by the following table \cite{pa20}:
\begin{center}
  \textit{Number Classification on $C_{a,0,0}$}%
\end{center}
\[%
\begin{tabular}
[c]{|c|c|c|}\hline
          &\emph{Ordinary flexes} & \emph{Hyperflexes}\\\hline
$a=0,\,6$ &$\mathbf{0}$           & $\mathbf{12}$\\\hline
 \emph{Otherwise}         &$\mathbf{16}$          & $\mathbf{4}$\\\hline
\end{tabular}
\]
Also, by Proposition 2.2, \[Orb_{G}\left[0:\beta:1\right]\cup Orb_{G}\left[0:\dfrac{1}{\beta}:1\right]\subseteq W_1(C_{a,0,0}).\] Hence, by Proposition 2.5, $C_{0,0,c}$ has $2_2$ of hyperflex points. So by Proposition $2.1(4),$ we have

\begin{center}
  \textit{Orbit Classification on $C_{a,0,0}$}%
\end{center}
\[%
\begin{tabular}
[c]{|c|c|c|}\hline
          &\emph{Ordinary flexes} & \emph{Hyperflexes}\\\hline
$a=0,\,6$ &                       & $2_2,\,2_{4}$\\\hline
Otherwise & $4_{4}$               & $2_{2}$\\\hline
\end{tabular}
\]
\bigskip
This completes the proof.
\end{proof}

\begin{theorem}
If $a,b,c$ satisfy $\Gamma_{ij}$, where $(ij)\in S_2$, then the number and the geometry of the 1-Weierstrass points for the quartics $C_{a,b,c}$ are classified as follows.
\begin{center}
  \textit{Number and Orbit Classification on $C_{a,b,c}$}%
\end{center}

\begin{center}
\begin{tabular}{|c|c|}
  \hline
  Ordinary flex & Hyperflex \\\hline
  $\mathbf{16}$   & $\mathbf{4}$\\
  $4_4$         & $2_2$ \\\hline
  $\mathbf{8}$    & $\mathbf{8}$\\
  $2_4$         & $2_2,\,1_4$ \\\hline
  $\mathbf{0}$    & $\mathbf{12}$\\
   & $2_2,\,2_4$ \\\hline
  \hline
\end{tabular}
\end{center}
\end{theorem}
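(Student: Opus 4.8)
The plan is to combine the coordinate symmetry of $C_{a,b,c}$ with the orbit and flex data of Lemma 2.1 and Propositions 2.2--2.5, and to close the bookkeeping with the total weight $24$ of Lemma 1.7. First I would note that permuting the homogeneous coordinates $x,y,z$ permutes the monomials $x^2y^2,\,x^2z^2,\,y^2z^2$, and hence induces the natural $S_3$-action on $(a,b,c)$ that permutes $\{P_1,P_2,P_3\}$ and the loci $\Gamma_{12},\Gamma_{13},\Gamma_{23}$. It therefore suffices to treat one representative, say $\Gamma_{12}$, so I assume $P_1=P_2=0$. Since $P_1-P_2=b^2-c^2$, this forces $b=\pm c$; taking the representative $b=c$, the relation $P_1=0$ becomes exactly $P(a,b)=a^2+b^2-ab^2=0$, so that $\Gamma_{12}$ is cut out by the curve $\{P(a,b)=0\}$ in the parameter plane.

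Next I would isolate the hyperflexes forced by the $H$-action. Because $P_1=0$, Proposition 2.2 places one of the two orbits $Orb_G[0:\beta:1]$, $Orb_G[0:\tfrac1\beta:1]$ in $W_1(C_{a,b,c})$, and because $P_2=0$, Proposition 2.3 likewise contributes one orbit from $A_2$; by Proposition 2.5 all of these points are hyperflexes. The two orbits are disjoint (one lies on $x=0$, the other on $y=0$) and each has cardinality $2$ by the Remark following Lemma 2.1, so together they give the block $2_2$ of hyperflexes --- four points of weight $8$ --- which appears in every row of the table.

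It then remains to classify the $1$-Weierstrass points lying off $X(C_{a,b,c})$; by the same Remark these occur in orbits of size $4$. By Lemma 1.7 their total weight is $24-8=16$, and by Lemma 1.6 each such orbit is either ordinary (weight $4$) or consists of hyperflexes (weight $8$). Weight conservation alone then leaves exactly three possibilities for the residual orbits: four ordinary orbits ($4_4$); two ordinary orbits and one hyperflex orbit ($2_4$ and $1_4$); or two hyperflex orbits and no ordinary orbit ($2_4$). Re-adding the forced $2_2$ reproduces precisely the three rows $\mathbf{16}/\mathbf{4}$, $\mathbf{8}/\mathbf{8}$, $\mathbf{0}/\mathbf{12}$. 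To decide which case occurs for given $(a,b,c)$, I would compute the Hessian $H_F$, substitute $c=b$ and $P(a,b)=0$, form $\mathrm{Res}(H_F,F)$, and divide out the known special-orbit factors; the residual factor, together with its multiplicities (the flex order, by Lemma 1.6), records how many size-$4$ orbits degenerate to hyperflexes. I expect the degeneration to be governed by the auxiliary equation $Q(a,b)=0$, so that Case 1 holds off $\Gamma$ while the extra hyperflex orbits appear exactly on $\Gamma=\{P=Q=0\}$.

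The main obstacle is this last step: carrying out the resultant computation cleanly enough to factor off the special orbits and to read the flex orders of the remaining flexes, and then pinning down the finer condition inside $\Gamma$ that separates the one-extra-orbit case ($2_4$ ordinary, $1_4$ hyperflex) from the two-extra-orbit case ($2_4$ hyperflex). Everything else is forced by the symmetry reduction, by Propositions 2.2, 2.3 and 2.5, and by the weight identity of Lemma 1.7; the genuinely computational heart is the discriminant analysis of the residual Hessian factor that determines the multiplicity with which generic flexes collide.
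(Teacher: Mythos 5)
Your proposal is correct and follows essentially the same route as the paper's proof: the two forced size-$2$ hyperflex orbits coming from Propositions 2.2--2.4 combined with Proposition 2.5, then weight bookkeeping via Lemma 1.6 and Lemma 1.7 (residual weight $24-8=16$ distributed among size-$4$ orbits of weight $4$ or $8$), which yields exactly the three rows of the table. The ``main obstacle'' you flag --- pinning down which row occurs for a given $(a,b,c)$ via a residual resultant/discriminant analysis and the locus $Q(a,b)=0$ --- is not actually part of the theorem's claim, which only enumerates the possible configurations; the paper addresses occurrence separately, through the explicit examples of Case II in Section 3.
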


\begin{proof}
Assume that $a,b,c$\,\, satisfy $\Gamma_{ij}$, where $(ij)\in S_2$, then by Propositions 2.2,\,2.3 and 2.4,\,\, $C_{a,b,c}$ has $2_2$ of flex points. Moreover, by Propositions 2.5, these 2 orbits are hyperflex. Now, recalling that the number of the 1-Weierstrass points of $C_{a,b,c}$, counted with their weights, is $24$. Proposition $2.1(4)$ implies that the orbits of the 1-Weierstrass points for the quartics $C_{a,b,c}$ can be classified as given in the following table:
\begin{center}
  \textit{Orbit Classification on $C_{a,b,c}$}%
\end{center}

\begin{center}
\begin{tabular}{|c|c|}
  \hline
  Ordinary flex & Hyperflex \\\hline
  $4_4$         & $2_2$ \\\hline
  $2_4$         & $2_2,\,1_4$ \\\hline
     & $2_2,\,2_4$ \\\hline
\end{tabular}
\end{center}
\end{proof}

\begin{theorem}
If $(ijk)\in S_3$ such that $P_i(a,b,c)=0$ and $P_j(a,b,c)P_k(a,b,c)\neq0,$ then
the number and the geometry of the 1-Weierstrass points for the quartics $C_{a,b,c}$ are given in the following table:
\begin{center}
  \textit{Number and Orbit Classification on $C_{a,b,c}$}%
\end{center}
\begin{center}
\begin{tabular}{|c|c|}
  \hline
  Ordinary flex & Hyperflex \\\hline
  $\mathbf{20}$& $\mathbf{2}$\\
  $5_4$      & $1_2$ \\\hline
  $\mathbf{12}$& $\mathbf{6}$\\
  $3_4$      & $1_2,\,1_4$ \\\hline
  $\mathbf{4}$ & $\mathbf{10}$\\
  $1_4$ & $1_2,\,2_4$ \\\hline
  \hline
\end{tabular}
\end{center}
\end{theorem}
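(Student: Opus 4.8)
The plan is to run the same orbit-and-weight bookkeeping used in Cases~I and~II, now under the single hypothesis $P_i(a,b,c)=0$ with $P_j(a,b,c)P_k(a,b,c)\neq 0$. First I would pin down $X(C_{a,b,c})\cap W_1(C_{a,b,c})$. By Propositions 2.2, 2.3 and 2.4 the orbit-pair $A_\ell$ meets $W_1(C_{a,b,c})$ exactly when the corresponding $P_\ell$ vanishes; so under the present hypothesis precisely one pair, namely $A_i$, contributes flexes, while $A_j$ and $A_k$ contribute none. Each of those propositions narrows the intersection to a single $H$-orbit of two points, so $X(C_{a,b,c})\cap W_1(C_{a,b,c})$ is exactly one orbit of type $1_2$, and by Proposition 2.5 its two points are hyperflexes. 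Since a hyperflex carries $1$-weight $2$ (the weight table for quartics), this orbit accounts for weight $2\cdot 2=4$.

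Next I would treat the $1$-Weierstrass points lying off $X(C_{a,b,c})$. Because automorphisms preserve $W_1(C_{a,b,c})$ and the $1$-gap sequence \cite{pa3}, the remaining flexes form a union of $H$-orbits on each of which the ordinary/hyperflex type, hence the weight, is constant; and since these orbits are disjoint from $X(C_{a,b,c})$, Proposition 2.1(4) forces each of them to have exactly four points. Writing $m$ for the number of ordinary-flex orbits and $n$ for the number of hyperflex orbits outside $X(C_{a,b,c})$, an ordinary orbit contributes weight $4\cdot 1=4$ and a hyperflex orbit weight $4\cdot 2=8$.

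The decisive step is the global count: for a smooth plane quartic the $1$-weights sum to $g(g^2-1)=24$. Adding in the forced $1_2$ hyperflex orbit this reads
\[
4m+8n+4=24,\qquad\text{i.e.}\qquad m+2n=5.
\]
The nonnegative integer solutions are $(m,n)=(5,0),(3,1),(1,2)$, and I would simply translate them into the table: $(5,0)$ gives $5_4$ ordinary flexes with the lone $1_2$ hyperflex ($\mathbf{20}$ and $\mathbf{2}$); $(3,1)$ gives $3_4$ ordinary flexes with hyperflexes $1_2,1_4$ ($\mathbf{12}$ and $\mathbf{6}$); and $(1,2)$ gives $1_4$ ordinary flexes with hyperflexes $1_2,2_4$ ($\mathbf{4}$ and $\mathbf{10}$).

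The only genuinely nonroutine point is the completeness of this enumeration, and it sits in the second paragraph: one must be certain that no $1$-Weierstrass point off $X(C_{a,b,c})$ can lie in an orbit of size other than four, and that the flex type cannot change within an orbit. Both are secured by the automorphism-invariance of $W_1(C_{a,b,c})$ and its gap sequences together with the orbit-size dichotomy of Proposition 2.1(4); once these are in place, $m+2n=5$ captures every constraint. It is worth flagging that the theorem asserts only that the classification is \emph{one} of the three listed types—the weight count alone does not single out which $(a,b,c)$ realizes which row—so the proof is an exhaustive enumeration of the admissible orbit profiles rather than a determination of a unique case.
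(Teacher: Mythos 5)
Your proposal is correct and follows essentially the same route as the paper's proof: Propositions 2.2--2.5 force a single $1_2$ orbit of hyperflexes, Proposition 2.1(4) forces all remaining flex orbits to have four points, and the total weight $24$ enumerates the possibilities. You merely make explicit the bookkeeping the paper leaves implicit (the equation $4m+8n+4=24$, i.e.\ $m+2n=5$, with solutions $(5,0)$, $(3,1)$, $(1,2)$), which is a faithful expansion rather than a different argument.
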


\begin{proof}
Assume that $P_i(a,b,c)=0$ and $P_j(a,b,c)P_k(a,b,c)\neq0$, where $(ijk)\in S_3.$ Then, $C_{a,b,c}$ has $1_2$ of hyperflex points and the other orbits of the Weierstrass points consist of 4 points. This gives rise to the following table:
\begin{center}
  \textit{Orbit Classification on $C_{a,b,c}$}%
\end{center}
\begin{center}
\begin{tabular}{|c|c|}
  \hline
  Ordinary flex & Hyperflex \\\hline
  $5_4$      & $1_2$ \\\hline
  $3_4$      & $1_2,\,1_4$ \\\hline
  $1_4$ & $1_2,\,2_4$ \\\hline
  \end{tabular}
\end{center}

\end{proof}

\begin{theorem}
If $P_1(a,b,c)P_2(a,b,c)P_3(a,b,c)\neq0$, then the number and the geometry of the 1-Weierstrass points for the quartics $C_{a,b,c}$ are given by the following table:
\begin{center}
  \textit{Number and Orbit Classification on $C_{a,b,c}$}%
\end{center}
\begin{center}
\begin{tabular}{|c|c|}
  \hline
  Ordinary flex & Hyperflex \\\hline
  $\mathbf{24}$  &$\mathbf{0}$\\
  $6_4$          &  \\\hline
  $\mathbf{16}$  &$\mathbf{4}$\\
  $4_4$          & $1_4$ \\\hline
  $\mathbf{8}$   &$\mathbf{8}$\\
   $2_4$         & $2_4$ \\\hline
   $\mathbf{0}$  &$\mathbf{12}$\\
                 & $3_4$ \\\hline
  \hline
\end{tabular}
\end{center}

\end{theorem}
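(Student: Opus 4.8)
The plan is to repeat the orbit-bookkeeping of Theorems 2.7--2.9, now in the generic situation where all three forms $P_i$ are nonzero. First I would invoke Propositions 2.2, 2.3 and 2.4, which assert that the special families $A_1,A_2,A_3$ meet $W_1(C_{a,b,c})$ exactly when $P_1,P_2,P_3$ vanish, respectively. Since Lemma 2.1 identifies $X(C_{a,b,c})$ with $A_1\cup A_2\cup A_3$, the hypothesis $P_1P_2P_3\neq0$ gives at once
\[
X(C_{a,b,c})\cap W_1(C_{a,b,c})=\phi ;
\]
that is, no $1$-Weierstrass point carries a nontrivial stabilizer under $H$. (Proposition 2.5 plays no role here: with $X\cap W_1=\phi$ there are no special Weierstrass points whose flex order it would have to control.)

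Next I would convert this into a statement about orbit lengths. By Proposition 2.1(4), every point lying off $X(C_{a,b,c})$ has an $H$-orbit of length exactly $4$; combined with the previous step, every $1$-Weierstrass point lies in an orbit of size $4$. Moreover, by Lemma 1.8 the elements of $H$ (being automorphisms of $C_{a,b,c}$) preserve $W_1(C_{a,b,c})$ and fix the $1$-gap sequence at each point, so by Lemma 1.6 they carry ordinary flexes to ordinary flexes and hyperflexes to hyperflexes. Hence the ordinary flexes and the hyperflexes each form a disjoint union of full $H$-orbits of length $4$; I write $4m$ and $4n$ for the respective point counts, where $m,n\geq0$ are the numbers of ordinary-flex and hyperflex orbits.

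The classification then reduces to a one-line Diophantine count. An ordinary flex carries $1$-weight $1$ and a hyperflex carries $1$-weight $2$ (Lemma 1.6), while the total $1$-weight on a smooth plane quartic equals $g(g^2-1)=24$ (Lemma 1.7). Therefore
\[
4m+8n=24, \qquad \text{equivalently} \qquad m+2n=6 ,
\]
whose non-negative integer solutions are precisely $(m,n)\in\{(6,0),(4,1),(2,2),(0,3)\}$. These produce (ordinary flex, hyperflex) point counts $(24,0)$, $(16,4)$, $(8,8)$, $(0,12)$ and orbit types $6_4$; $4_4$ with $1_4$; $2_4$ with $2_4$; and $3_4$ --- exactly the four rows of the table.

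I do not anticipate a genuine obstacle in the count itself, which is identical in spirit to the three preceding theorems; the decisive point is solely the opening reduction $X\cap W_1=\phi$, which is secured by the resultant computations of Propositions 2.2--2.4. What this weight argument does \emph{not} settle is which of the four rows is realized by a given admissible triple $(a,b,c)$; that finer question is deferred to the explicit computations of Section 3, where each of the four possibilities is exhibited.
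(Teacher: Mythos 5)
Your proposal is correct and follows essentially the same route as the paper's own (much terser) proof: the hypothesis $P_1P_2P_3\neq0$ forces $X(C_{a,b,c})\cap W_1(C_{a,b,c})=\phi$ via Propositions 2.2--2.4, so every $1$-Weierstrass point lies in an $H$-orbit of length $4$, and the weight equation $4m+8n=24$ from Lemmas 1.6 and 1.7 yields exactly the four rows of the table. You in fact supply details the paper leaves implicit --- notably the appeal to Lemma 1.8 to see that each orbit is homogeneous in flex type --- so no gap remains.
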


\begin{proof}
Assume that $P_1(a,b,c)P_2(a,b,c)P_3(a,b,c)\neq0$, then $W_1\big(C_{a,b,c}\big)$ is the union of orbits, each consists of four points. So, the geometry of the orbits of the 1-Weierstrass points is given by the following table:
\begin{center}
  \textit{Orbit Classification on $C_{a,b,c}$}%
\end{center}
\begin{center}
\begin{tabular}{|c|c|}
  \hline
  Ordinary flex & Hyperflex \\\hline
  $6_4$          &  \\\hline
  $4_4$          & $1_4$ \\\hline
  $2_4$         & $2_4$ \\\hline
                 & $3_4$ \\\hline
  \end{tabular}
\end{center}
\end{proof}

\section{Examples}
This section is devoted to construct examples that illustrate the cases
mentioned in \emph{Theorems 2.8, 2.9, 2.10}. It should be noted that under a
given  condition more than one case arise, so it is convenient to
investigate whether these cases can occur.\\\\
\textbf{Case II}\hspace*{-.5cm}
\begin{description}
\item[(1)]Let $a=3,\,\,b=3,\,\,c=\frac{3}{2}\left(3-\sqrt{5}\right),$ then $W_1\big(C_{a,b,c}\big)$ consists of $4_4$ of ordinary flex points and $2_2$ of hyperflex points. In other words,
\[W_1\big(C_{a,b,c}\big)=\bigcup_{i=1}^{6}Orb_G[\varsigma_i:\upsilon_i:\epsilon_i],\]
where,
\begin{eqnarray*}
\varsigma_1&:\approx&-0.618034...i,\,\,\,\upsilon_1:=0,\,\,\,\epsilon_1:=1,\\
\varsigma_2&:\approx&-0.618034...i,\,\,\,\upsilon_2:=1,\,\,\,\epsilon_2:=0,\\
\varsigma_3&:\approx&-0.765842...-0.868419...i,\,\,\,\upsilon_3:\approx-0.12504...+0.992152...i,\,\,\,\epsilon_3:=1,\\
\varsigma_4&:\approx&-0.765842...+0.868419...i,\,\,\,\upsilon_4:\approx-0.12504...-0.992152...i,\,\,\,\epsilon_4:=1,\\
\varsigma_5&:\approx&0.521157...-0.432432...i,\,\,\,\upsilon_5:\approx0.184489...+0.982835...i,\,\,\,\epsilon_5:=1,\\
\varsigma_6&:\approx&0.521157...+0.432432...i,\,\,\,\upsilon_6:\approx-0.184489...-0.982835...i\,\,\,\epsilon_6:=1.
\end{eqnarray*}

\item[(2)]Let $c=-\frac{3}{2}\sqrt{\frac{1}{14}\left(21+i\sqrt{7}\right)},\,\,a=\frac{3}{8}\left(5+i \sqrt{7}\right),\,\,b=-\sqrt{\frac{27}{8}+\frac{9 i}{8\sqrt{7}}},$ then $W_1\big(C_{a,b,c}\big)$ consists of 8 hyperflex points and 8 ordinary flex points. In other words,
    \[W_1\big(C_{a,b,c}\big)=\bigcup_{i=1}^{5}Orb_G[\varsigma_i:\upsilon_i:1],\]
where
\begin{eqnarray*}
\varsigma_1&:=&0,\,\,\,\upsilon_1:\approx0.91156...-0.196214...i,\\
\varsigma_2&:\approx&-0.91156...+0.196214...i,\,\,\,\upsilon_2:=0,\\
\varsigma_3&:\approx&0.651994...+0.0981069...i,\,\,\,\upsilon_3:\approx-0.651994...-0.0981069...i,\\
\varsigma_4&:\approx&0.530835...+0.40233...i,\,\,\,\upsilon_4:\approx-1.24444...+0.212546...i,\\
\varsigma_5&:\approx&-1.24444...+0.212546...i,\,\,\,\upsilon_5:\approx0.530835...+0.40233...i.
\end{eqnarray*}
\item[(3)]Let $c=\frac{6}{\sqrt{5}},\,\,a=\frac{6}{5},\,\,b=\frac{6}{\sqrt{5}},$ then $W_1$ consists of 12 hyperflex points \cite{pa2}.
\end{description}
\newpage
\textbf{Case III}
\begin{description}
  \item[(1)] Let $c=4,b=6-3\sqrt{3},a=3$, then $W_1\big(C_{a,b,c}\big)$ consists of $22$ flex points. In other words,
      \[W_1\big(C_{a,b,c}\big)=\bigcup_{i=1}^{6}Orb_G[\varsigma_i:\upsilon_i:1],\]
      where
\begin{eqnarray*}
\varsigma_1&:\approx&3.72978...,\,\,\,\upsilon_1:\approx2.2488...i,\\
\varsigma_2&:\approx&0.225851...+1.28153...i,\,\,\,\upsilon_2:\approx1.14986...-1.07474...i,\\
\varsigma_3&:\approx&0.225851...-1.28153...i,\,\,\,\upsilon_3:\approx1.14986...+1.07474...i,\\
\varsigma_4&:\approx&0.334413...-1.0111...i,\,\,\,\upsilon_4:\approx-0.471629...+0.349376...i,\\
\varsigma_5&:\approx&0.334413...+1.0111...i,\,\,\,\upsilon_5:\approx-0.471629...-0.349376...i,\\
\varsigma_6&:=&0,\,\,\,\upsilon_6:\approx0.517638...i.
\end{eqnarray*}
      \end{description}
\begin{remark}
\em{There is no guarantee that the other two cases for which $W_1\big(C_{a,b,c}\big)$ consists of $18$ or $14$ flexes can occur.}
\end{remark}
\noindent\textbf{Case IV}
\begin{description}
  \item[(1)] Let $a=3,\,\,b=3,\,\,c=0,$ then $C_{a,b,c}$ has $24$ ordinary flex points. In other words;
  \[W_1(C)=\bigcup_{i=1}^{6}Orb_G[\varsigma_i:\upsilon_i:1],\]
  where

\begin{eqnarray*}
\varsigma_1&:=&0-1.75642...i,\,\,\,\upsilon_1:=-3.01936...,\\
\varsigma_2&:\approx&-0.581718...i,\,\,\,\upsilon_2:\approx-0.33119...,\\
\varsigma_3&:\approx&-0.91777...+1.15085...i,\,\,\,\upsilon_3:\approx-0.22252...-0.97492...i,\\
\varsigma_4&:\approx&-0.91777...-1.15085...i,\,\,\,\upsilon_4:\approx-0.22252...+0.97492...i,\\
\varsigma_5&:\approx&-0.59367...+0.39822...i,\,\,\,\upsilon_5:\approx-0.37935...+0.92525...i,\\
\varsigma_6&:\approx&-0.593675...-0.39822...i,\,\,\,\upsilon_6:\approx-0.37935...-0.92525...i.
\end{eqnarray*}
\item[(2)] Let $a=\sqrt{5}i,\,\,b=3,\,\,c=0,$ then
\[W_1(C)=\bigcup_{i=1}^{4}Orb_H[i \sqrt{\frac{2}{3}}:\upsilon_i:1],\]
  where $\upsilon_i$ are the four roots of the equation $x^4+1=0.$ That is, $C_{a,b,c}$ has $16$ flex points classified as $2_2$ of hyperflex and $2_2$ of ordinary flex.
\item[(3)] Let $c=3\sqrt{\frac{2}{5}},\,\,b=3\sqrt{\frac{2}{5}},\,\,a=0,$ then
   \[W_1=\bigcup_{i=0}^{4}Orb_H[\xi_i:\epsilon_i:1],\]
   where
\begin{eqnarray*}
\xi_0&:\approx&0.562341i,\,\,\,\epsilon_0:\approx0.562341i,\\
\xi_1&:\approx&0.204102-1.15107i,\,\,\,\epsilon_1:\approx0.301675-0.269467i,\\
\xi_2&:\approx&0.301675-0.269467i,\,\,\,\epsilon_2:\approx0.204102-1.15107i,\\
\xi_3&:\approx&0.204102 + 1.15107i,\,\,\,\epsilon_3:\approx0.301675 + 0.269467i,\\
\xi_4&:\approx&0.301675 + 0.269467i,\,\,\,\epsilon_4:\approx0.204102 + 1.15107i.
\end{eqnarray*}
\item[(4)]$a=3,\,\,b=3\,\,c=3,$ then $C_{a,b,c}$ has $12$ hyperflex points \cite{pa20}.
\end{description}

\noindent{\fontsize{13.5}{9}\textbf{Concluding remarks.}}\\
We conclude the present paper with some remarks and comments.
\begin{itemize}
\item The computations included in this work have been performed by the use of MATHEMATICA program. The source code files are available.
\item The classification of the 1-Weierstrass points of \emph{Kuribayashi quartics with two parameters} treated in \cite{pa20} is a particular case of our results. In fact, letting $b=c$, one gets the following tables.
\begin{center}
  \textit{Case $b=0$}%
\end{center}
\[%
\begin{tabular}
[c]{|c|c|c|}\hline
\emph{Ordinary flexes} & \emph{Hyperflexes}\\\hline
$\mathbf{0}$           & $\mathbf{12}$\\
 $\mathbf{16}$          & $\mathbf{4}$\\\hline
\end{tabular}
\]

\begin{center}
   \textit{Case $b\neq0$}%
\end{center}
\begin{center}
\begin{tabular}{|c|c|c|}
  \hline
  & Ordinary flex & Hyperflex \\\hline
  &$\mathbf{16}$   & $\mathbf{4}$\\
  $P(a,b)=0$&$\mathbf{8}$    & $\mathbf{8}$\\
  &$\mathbf{0}$    & $\mathbf{12}$\\\hline
  &$\mathbf{24}$  &$\mathbf{0}$\\
  $P(a,b)\neq0$&$\mathbf{16}$  &$\mathbf{4}$\\
  &$\mathbf{8}$   &$\mathbf{8}$\\
  &$\mathbf{0}$  &$\mathbf{12}$\\\hline
\end{tabular}
\end{center}
\item The main theorems constitute a motivation to solve more general problems. One of these problems is the  investigation of the geometry of higher order and multiple Weierstrass points of $C_{a,b}$ to generalize the classification of the 2-Weierstrass points of \emph{Kuribayashi quartics with one parameter family} \cite{pa3, pa4}. However, this problem will be the object of a forthcoming work.
\item The technique used in this paper is completely different from that used by Hayakawa \cite{pa5}. Our technique consists of dividing the quartics by group actions into finite orbits and investigate the geometry of these orbits. The results obtained are more informative than those obtained by Hayakawa since we give the geometry of the 1-Weierstrass points.
\end{itemize}
\textbf{Acknowledgment} The authors would like express their sincere
gratitude to Prof. Nabil L. Youssef for his guidance throughout the
preparation of this work.

\bigskip\noindent

\end{document}